\theoremstyle{definition}
\newtheorem{Cor}{Corollary}
\newtheorem{Prop}{Proposition}
\newtheorem{Defn}{Definition}
\newtheorem{definition}{Definition}
\newtheorem{example}{Example}
\newtheorem{lemma}{Lemma}
\newtheorem{remark}{Remark}
\newtheorem{theorem}{Theorem}
\newcommand{\norm}[1]{\left\Vert#1\right\Vert}
\newcommand{\primes}{'}
\newcommand{\set}[1]{\left\{#1\right\}}
\DeclareMathOperator{\Part}{\mathcal{P}}
\DeclareMathOperator{\NC}{\mathit{NC}}
\newcommand{\mf}[1]{\mathbb{#1}}
\newcommand{\mc}[1]{\mathcal{#1}}
\newcommand{\mb}[1]{\mathbf{#1}}
\newcommand{\id}{\mathrm{Id}}
\newcommand{\ip}[2]{\left \langle #1, #2 \right \rangle}
\newcommand{\rc}{\mathrm{rc} }
\def\state{\varphi }
\def\A{a_{q,t}}
\def\pa{\tilde{p}}
\def\C{{\mathbb C}}
\def\c{{ f}}
\def\R{{\mathbb R}}
\def\N{{\mathbb N}}
\def\Z{{\mathbb Z}}
\def\Cum {{\mathrm K}^\mb{x}}
\def\Cumm {{\mathrm R}^\mb{x}_{\pi_f}}
\def\CummTensor {\widehat{\mathrm{R}}^\mb{x}_{\pi_f}}
\def\6{\, {\rm d}}
\def\B{b_{\alpha,q}}
\def\X{ \mathcal{B}_{\alpha,q}^{\lambda}}
\def\Y{ \mathrm{Y}_{q,t}}
\def\r{r}
\def\F{\mathcal{F}_{\rm fin}(H)}
\def\FD{\mathcal{F}_{\rm fin}(\mc{D})}
\def\id{I}
\def\P{\mathcal{P}}
\def\NC{\mathcal{NC}}
\def\PB{\mathcal{P}^B}
\def\InS{\text{\normalfont MaxC}}
\def\OptT{\mb{T}^{\mb{x}}}
\def\OptTTyl{\mb{\widehat{T}}^{\mb{x}}}
\def\InNB{\text{\normalfont rnarc}}
\def\InNA{\text{\normalfont rarc}}
\def\NB{\text{\normalfont Narc}}
\def\SLNB{\text{\normalfont MaxL}}
\def\SLNBlitvic{\text{\normalfont MLeft}}
\def\Sing{\text{\normalfont Sing}}
\def\nest{\text{\normalfont nest}}
\newcommand{\Semi}{{\normalfont Arc} }
\newcommand{\BC}{\mb{{B}}_\textbf{c} }
\newcommand{\ith}{\textbf{ i} }
\numberwithin{equation}{section}
\def\cvput#1[#2]{\pnode(#1,1){#1} \pscircle*(#1,1){.1} \rput(#1,.5){$#2$}}
\def\cvpuW#1[#2]{\pnode(#1,1){#1} \pscircle*(#1,1){.1} }
\def\cvpuDots#1[#2]{\pnode(#1,1){#1} \dots }
\newcommand{\Comment}[1]{}
\newcommand{\Tens}{\OptTTyl}
\title[Poisson type operators on the Fock space of type B and in the Blitvi{\'c} model]
{Poisson type operators on the Fock space of type B and in the Blitvi{\'c} model
}
\author[Wiktor Ejsmont]{Wiktor Ejsmont}
\address {
Mathematical Institute, University of Wroc\l aw \\
pl. Grunwaldzki 2/4, 50-384 Wroc\l aw, Poland}
\email{wiktor.ejsmont@gmail.com}
\begin{document}
\maketitle
\begin{abstract}
In \cite{Bia97} Biane proposed a new statistic on set partitions which
he called \emph{restricted crossings}.
In a series of papers \cite{Ans01,Ans04,Ans04b,Ans05} 
Anshelevich showed that this statistic is  an essential tool to 
investigate stochastic processes on $q$-Fock space.
In particular, Anshelevich constructed operators whose
moments count restricted crossings and used these operators
to develop a beautiful
  theory of noncommutative $q$-L\'{e}vy processes.  
In the present paper following Anshelevich we define gauge operators on $(\alpha,q)$-Fock and cumulants which are governed by statistics on  partitions of type B.
In addition 
 we investigate this construction in the context of a model of Blitvi{\'c} model \cite{B12}, where some related but different combinatorial structures appear,
 and we explain their relation with $t$-free probability.
\end{abstract}
\section{Introduction}

It is a basic fact in quantum field theory
that  field operators, that is sums of creation and annihilation operators on 
symmetric Fock space give rise to Gaussian distributions in the vacuum state.
Hudson and Parthasarathy \cite{HudPar} and Sch{\"u}rmann \cite{SchurCondPos} observed that by adding an appropriate gauge component
one obtains an operator with Poisson distribution and more generally,
arbitrary infinitely divisible distributions (provided all moments are finite) 
can be modeled on symmetric Fock space in this way.
Such distributions are characterized
by conditionally positive definite sequences of cumulants.
In \cite{BS91} Bo\.zejko and Speicher
constructed $q$-Fock spaces
by deforming the inner product on full Fock space
with the aid of positive definite functions on the symmetric group
depending on a continuous parameter $q$.
The corresponding field operators give rise to
$q$-Gaussian distributions, i.e.,
the orthogonalizing measures for the classical Rogers $q$-Hermite polynomials.
Anshelevich \cite{Ans01} constructed the corresponding gauge operators
and showed that they give rise to $q$-Poisson and other infinitely
divisible distributions, still being indexed by conditionally positive
definite sequences of so-called $q$-cumulants.
The main new combinatorial ingredient in this construction was the notion
of \emph{restricted crossings} defined earlier by Biane \cite{Bia97}.
It is worth noting that for $q = 0$ all these results restrict to free
probability see \cite{V85,V1,NS06}, thus providing operator
representations of freely infinitely divisible distributions.

The paper \cite{BEH15} exhibited a new kind of generalized Gaussian
processes from positive definite functions on Coxeter groups of type
$B$; that is, the symmetric group, which is a Coxeter group of type
$A$, is replaced by a Coxeter group of type $B$.
More precisely, in \cite{BEH15} we constructed a positive definite
function depending on two parameters $(\alpha,q)$, the corresponding
Fock space and Gaussian operators acting on them, and identified a new
combinatorial statistic underlying their distribution.
In the present article we first define a gauge operator on
$(\alpha,q)$-Fock space in the spirit of Anshelevich's approach.
In the calculation of the joint moments of these operators
new partitions, statistics and cumulants of type $B$ arise.
It turns out that we have to consider partitions with signed or bicolored
blocks, similar to the appearance of signed permutations
(i.e., permutations of the numbers $\pm 1, \dots ,\pm n$ 
arising in the natural definition of Coxeter groups of type $B$). 

The relation between Theorem~3 of the present article and  the main
result of \cite[Theorem 3.7]{BEH15}  is the following.
We recover the $(\alpha,q)$-formula for Gaussian moments
\cite[Equation (3.14)]{BEH15} by setting the gauge operator $T=0$ in Theorem 3.
Then only pair partitions contribute, there are no extended blocks
and the corresponding partition statistic coincides with the function from
 \cite{BEH15}.
%
Here the restricted crossings ($\rc$) of a partition are
those of Biane \cite{Bia97}.
At the beginning we give an informal definition of restricted
crossings as the number of intersections of arcs above the $x-$axis in
the corresponding diagram (see Figure
\ref{RestrictedCrossingsIntro2}).
  Moreover, there appears
another new statistic, which we called \emph{restricted negative nesting}
($\InNB$) and which counts the number of negative arcs covered
by other arcs (see Figure \ref{RestrictedCrossingsIntro2}). Note that
in this article the arcs shown in the figures with color $-1$ and $1$ are
denoted by - - - and ---, respectively.

\begin{figure}[h]

\begin{center}
\includegraphics[width=0.7\textwidth]{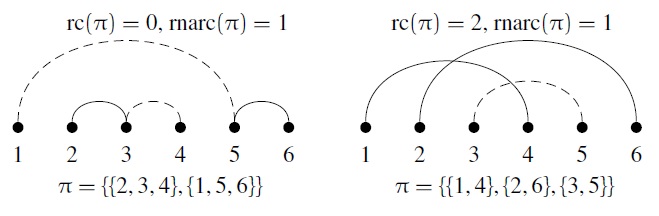}
\caption{ Examples of restricted crossing partition of type B of $6$ elements. 
}
 \label{RestrictedCrossingsIntro2}
\end{center}
\end{figure}

Let us now turn to the construction of $q$-deformed Fock space of
Bo\.zejko and Spei\-cher \cite{BS91}. 
Let  $-1<q<1$ and denote by
 $S(n)$ the symmetric group on $\{1,\dots,n\}$.
For a permutation $\sigma\in S(n)$ denote by
$|\sigma|:=\#\{(i,j):i<j,\sigma(i)>\sigma(j)\}$  the number of inversions.
For a separable real Hilbert space $H_\R$ with complexification $H$ 
we can equip the direct sum
$\mathcal{F}_q(H)=(\mathbb{C}\Omega)\oplus\bigoplus_{n=1}^\infty
H^{\otimes n}, $ where $\Omega$ denotes the vacuum vector,
with the $q$-deformed inner product induced by the
sesquilinear extension of
\begin{align*} \label{q-inner}
&\left\langle x_1\otimes\dots\otimes  x_m, y_1\otimes\dots\otimes
y_n\right\rangle_q = \delta_{m,n}\sum_{\sigma\in S(n)}q^{|\sigma|}\prod_{j=1}^n \langle x_j,y_{\sigma(j)}\rangle.
\end{align*}
On this space the creation and annihilation operators
\begin{align*}
& a_q^\ast(x) x_1 \otimes x_2 \otimes \ldots \otimes x_n=x_1 \otimes x_2 \otimes \ldots \otimes x_n \otimes x, \\
& a_q(x) x_1 \otimes x_2 \otimes \ldots \otimes x_n = \sum_{i=1}^n q^{n-i} \langle x, x_i\rangle \otimes x_1 \otimes \ldots \otimes \check{x}_i \otimes \ldots \otimes x_n
\end{align*}
are adjoint to each other; here superscript $\check{x}_i$ indicates
that ${x}_i $ is deleted from the product.
Note that in order to be compatible with the article \cite{BEH15}
all creation and annihilation operators  in this paper act from the right.
Then the right version of Anshelevich's construction \cite{Ans01} goes as follows.
Let $T$ be a bounded self-adjoint operator.
 The corresponding {gauge operator} $p(T)$ is the operator on
$\mathcal{F}_q(H)$ defined by
\begin{align*}
& p(T) \Omega = 0, 
\\
& p(T) x_1 \otimes x_2 \otimes \ldots \otimes x_n = \sum_{i=1}^n q^{n-i} x_1 \otimes \ldots \otimes \check{x}_i \otimes \ldots \otimes x_n \otimes (T x_i),
\end{align*}
for $x_i\in H$.
 Then the joint moments of a process $A_q(x)=a_q(x)+a_q(x)+p(T_x)$,
 indexed by $x\in H_\R$,
 with respect to vacuum expectation on  $q$-Fock space are given by
\begin{align*}
\langle\Omega, A_q(x_n)\cdots A_q(x_1)\Omega\rangle_{q}= \sum_{\pi\in \P_{\geq 2}(n)} q^{\rc(\pi)} \prod_{\substack{B \in \pi} }\Big\langle x_{\max(B)} ,\prod_{\substack{ i\in B\\ i\neq \min(B) ,\max(B)}} T_{x_{i}} x_{\min(B)} \Big\rangle, 
\end{align*}
where by $\P_{\geq 2}(n)$ we denote the set partitions of
$\{1,\dots,n\}$ without singletons and $\rc(\pi)$ denotes the number
of restricted crossings of $\pi$ in the sense of Biane \cite{Bia97}.

The paper is organized as follows. First we present the definition of $(\alpha,q)$-Fock space and the creation and annihilation operators acting on it.
In subsection 2.2 we introduce gauge operators and some of their natural properties, including norm estimates and the self-adjointness.
In subsections 2.3 and 2.4 we present partitions of type $B$ and
the relevant statistics.
The generalized processes and cumulants of type $B$ are studied in
Section~3, the main result being an explicit Wick formula for the
mixed moments.
Finally, in Section~4 we show that some similar combinatorial objects
appear in the context of $(q,t)$-probability spaces introduced by
Blitvi{\'c} \cite{B12}.

\section{Preliminaries  }
\subsection{$(\alpha,q)$-Fock space and creation, annihilation operators of type B} 
Let $\Sigma(n)$ be the set of bijections $\sigma$ of the $2n$ points $\{\pm1,\cdots, \pm n\}$ such that 
$\sigma(-k)=-\sigma(k), k=1,\dots,n$.  Equipped with the composition operation as a product, $\Sigma(n)$ becomes a group and is called a \emph{Coxeter group of type B} or a \emph{hyperoctahedral group}. 
The Coxeter group $\Sigma(n)$ is generated by $\pi_0=(1,-1), \pi_i =(i,i+1)$, $i=1,\dots,n-1$ and will be denoted by $\Sigma(n)$. These generators satisfy the generalized braid relations $\pi_i^2=e, 0\leq i \leq n-1$, $(\pi_0\pi_1)^4=(\pi_i \pi_{i+1})^3=e, 1\leq i < n-1$ and $(\pi_i \pi_j)^2=e$ if $|i-j|\geq2, 0\leq i,j\leq n-1$. Note that $\{\pi_i\mid i=1,\dots,n-1\}$ generate a symmetric group $S(n)$. 

Let $H_\R$ be a separable real Hilbert space and let $H$ be its complexification with inner product $\langle\cdot,\cdot\rangle$, linear on the right component and anti-linear on the left. When corresponding elements are in $H_\R$, it holds true that $\langle x,y\rangle=\langle y,x\rangle$.
We also assume that there exists a  selfadjoint involution $H\ni x\mapsto \bar{x}\in H$, which is a selfadjoint linear bounded operator on $H$ such that the double application of it becomes the identity operator.
   Let $\F$ be the (algebraic) full Fock space over $H$
\begin{equation}
\F:= \bigoplus_{n=0}^\infty H^{\otimes n} ,\label{fockspace}
\end{equation} 
with convention that $H^{\otimes 0}=\C\Omega$ is a one-dimensional normed unit vector. Note that elements of $\F$ are finite linear combinations of the elements from $H^{\otimes n}, n\in \N\cup\{0\}$ and we do not take the completion. For $\alpha, q \in(-1,1)$ we define the type B symmetrization operator on $H^{\otimes n}$, 
\begin{align}
&P_{\alpha,q}^{(n)}= \sum_{\sigma \in \Sigma(n)} \alpha^{l_1(\sigma)} q^{l_2(\sigma)}\, \sigma,\qquad n \geq1, \\ 
&P_{\alpha,q}^{(0)}= \id_{H^{\otimes 0}}, 
\end{align}
where $\sigma \in \Sigma(n)$ is in an irreducible form with minimal length and in this case let 
\begin{align*}
&l_1(\sigma) \text{ be the number of $\pi_0$ appearing in $\sigma$}, \\
&l_2(\sigma) \text{ be the number of $\pi_i, 1 \leq i \leq n-1$, appearing in $\sigma$},
\\&\pi_i(x_1\otimes\cdots \otimes x_n) = x_1 \otimes \cdots \otimes x_{i-1} \otimes x_{i+1} \otimes x_{i} \otimes x_{i+2}\otimes \cdots \otimes x_{n},& n \geq2,  \\
&\pi_0(x_1\otimes\cdots \otimes x_n)= \overline{x_1}\otimes x_2\otimes\cdots \otimes x_n,& n \geq 1. 
\end{align*}
Moreover let 
$$
P_{\alpha,q}:=\bigoplus_{n=0}^\infty P_{\alpha,q}^{(n)},
$$ be the type-B symmetrization operator acting on the algebraic full Fock space. We deform the inner product by using the type-B symmetrization operator:
\begin{equation}
\langle x_1 \otimes \cdots \otimes x_m, y_1 \otimes \cdots \otimes y_n\rangle_{\alpha,q}:=\langle x_1 \otimes \cdots \otimes x_m, P_{\alpha,q}^{(n)}(y_1 \otimes \cdots \otimes y_n)\rangle_{0,0}, 
\end{equation}
where $\langle x_1 \otimes \cdots \otimes x_m, y_1 \otimes \cdots \otimes y_n\rangle_{0,0}:= \delta_{m,n}\prod_{i=1}^n \langle x_i, y_i\rangle $. The $(\alpha,q)$-inner product is a semi-inner product for $\alpha,q\in[-1,1]$. We restrict the parameters to the case $\alpha,q \in(-1,1)$ so that the deformed semi-inner product is an inner product. 
\begin{Defn} For $\alpha,q\in(-1,1)$, the algebraic full Fock space $\F$ equipped with the inner product $\langle\cdot,\cdot \rangle_{\alpha,q}$ is called the \emph{Fock space of type B} or the \emph{$(\alpha,q)$-Fock space}. Let 
$$\B(x):=\r_q(x)+ \alpha \ell^N_{q}(x), \qquad x \in H, 
$$ 
where 
\begin{align}
\r_q(x)(x_1\otimes \cdots \otimes x_n)&= \sum_{k=1}^n q^{n-k} \langle x, x_k \rangle\, x_1\otimes \cdots \otimes \check{x}_k \otimes \cdots \otimes x_n, \qquad \r_q(x)\Omega=0,\label{rq}\\
\ell^N_q(x)(x_1\otimes \cdots \otimes x_n)&= q^{n-1}\sum_{k=1}^n q^{k-1} \langle x,\bar{x}_k\rangle\, x_1\otimes \cdots \otimes \check{x}_k \otimes \cdots \otimes x_n, \qquad \ell^N_q(x)\Omega=0. \label{lq}
\end{align}
Operator $\B^\ast(x)$ is its adjoint with respect to the inner product $\langle\cdot,\cdot \rangle_{\alpha,q}$. The operators $\B^\ast(x)$ and $\B(x)$ are called \emph{creation and annihilation operators of type B} or \emph{$(\alpha,q)$-creation and annihilation operators}. Denote by $\state$ the vacuum vector state $\state(X)=\langle\Omega, X\Omega\rangle_{\alpha,q}$ and we denote $\state_{\alpha,q}(X)=\state(X)$.
\end{Defn}
\begin{remark} (1). For $x\in H$ the creation operator of type B has the following form
\begin{align*}
&\B^\ast(x)(x_1 \otimes \cdots \otimes x_n):= x_1 \otimes \cdots \otimes x_n \otimes x, &&n\geq 1\\
&\B^\ast(x)\Omega=x.
\end{align*}
(2). Operators $\B^\ast(x),\B(x)$ are bounded, for $\alpha,q\in(-1,1)$; see \cite{BEH15}. 
\end{remark}
\noindent
\textbf{Additional facts.}
\noindent In this part let us recall the properties of creation and annihilation operators from \cite{BEH15}. We will use them in the next subsection.
\begin{Prop}\label{prop1} We have the decomposition 
\begin{equation}\label{decomposition}
P^{(n)}_{\alpha,q}=(P^{(n-1)}_{\alpha,q}\otimes I)R^{(n)}_{\alpha,q}={R^{(n)}_{\alpha,q}}^*(P^{(n-1)}_{\alpha,q}\otimes I) \text{~on $H^{\otimes n}$}, \qquad n\geq 1, 
\end{equation}
where
\begin{equation}
R^{(n)}_{\alpha,q} = 1+\sum_{k=1}^{n-1}q^{k}\pi_{n-1}\cdots \pi_{n-k} + \alpha q^{n-1}\pi_{n-1} \pi_{n-2} \cdots \pi_{1}\pi_0\left(1+\sum_{k=1}^{n-1}q^{k}\pi_{1}\cdots \pi_{k}\right), 
\end{equation}
and the adjoint ${R^{(n)}_{\alpha,q}}^*$ is taken with respect to $\langle\cdot,\cdot \rangle_{0,0}$. 
\end{Prop}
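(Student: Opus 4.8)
The plan is to prove the recursion by decomposing the hyperoctahedral group $\Sigma(n)$ into cosets of the parabolic subgroup $\Sigma(n-1)$ and then reading off $R^{(n)}_{\alpha,q}$ as the weighted sum of the minimal coset representatives. First I would identify the copy of $\Sigma(n-1)$ inside $\Sigma(n)$ as $W_J:=\langle\pi_0,\pi_1,\dots,\pi_{n-2}\rangle$. Since each of these generators moves only the first $n-1$ tensor legs and fixes the $n$-th, the weighted group sum $\sum_{\tau\in W_J}\alpha^{l_1(\tau)}q^{l_2(\tau)}\tau$ acts on $H^{\otimes n}$ exactly as $P^{(n-1)}_{\alpha,q}\otimes I$; here one uses that $W_J$ is parabolic, so that both the length and the generator counts $l_1,l_2$ of $\tau$ computed inside $\Sigma(n-1)$ coincide with those computed inside $\Sigma(n)$ (a reduced word for $\tau$ using only $J$-generators stays reduced in $\Sigma(n)$).

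The combinatorial crux is the second step: exhibiting the minimal-length representatives of the right cosets $W_J\backslash\Sigma(n)$. Since $[\Sigma(n):\Sigma(n-1)]=2n$ and the coset of $\sigma$ is detected by the value $\sigma^{-1}(n)\in\{\pm1,\dots,\pm n\}$, there are exactly $2n$ cosets. I would check that the $2n$ group elements occurring in $R^{(n)}_{\alpha,q}$, namely $e$, the words $\pi_{n-1}\cdots\pi_{n-k}$ for $k=1,\dots,n-1$, and $\pi_{n-1}\cdots\pi_1\pi_0$ together with $\pi_{n-1}\cdots\pi_1\pi_0\pi_1\cdots\pi_k$ for $k=1,\dots,n-1$, realise the values $\sigma^{-1}(n)=n$, $\;n-k$, $\;-1$, and $-(k+1)$ respectively, hence lie in pairwise distinct cosets and form a full transversal. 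One then verifies that each displayed word is reduced and satisfies the left-ascent condition $\ell(\pi_i c)>\ell(c)$ for all $i\in\{0,\dots,n-2\}$, so that it is the length-minimal element of its coset; consequently every $\sigma\in\Sigma(n)$ factors uniquely as $\sigma=\tau c$ with $\ell(\sigma)=\ell(\tau)+\ell(c)$.

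Third, I would upgrade length-additivity to separate additivity of $l_1$ and $l_2$. By the Matsumoto--Tits theorem any two reduced words differ by braid moves, and in the Coxeter diagram of type $B_n$ the generator $\pi_0$ participates only in the even braid relation $(\pi_0\pi_1)^4=e$; hence the number $l_1$ of occurrences of $\pi_0$ is a well-defined nonnegative integer invariant, and so is $l_2=\ell-l_1$. Applied to the reduced factorization $\sigma=\tau c$ this gives $l_1(\sigma)=l_1(\tau)+l_1(c)$ and $l_2(\sigma)=l_2(\tau)+l_2(c)$. Summing over cosets,
\[ P^{(n)}_{\alpha,q}=\sum_{c}\sum_{\tau\in W_J}\alpha^{l_1(\tau)+l_1(c)}q^{l_2(\tau)+l_2(c)}\,\tau c=(P^{(n-1)}_{\alpha,q}\otimes I)\sum_{c}\alpha^{l_1(c)}q^{l_2(c)}\,c, \]
and the inner sum over the representatives is precisely the stated $R^{(n)}_{\alpha,q}$, each $c$ contributing $q^{k}$, $\alpha q^{n-1}$, or $\alpha q^{n-1+k}$ in accordance with the list above.

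Finally, for the second identity I would pass to the adjoint with respect to $\langle\cdot,\cdot\rangle_{0,0}$. Each generator acts as a self-adjoint involution for this form — the $\pi_i$ with $i\ge1$ are leg-transpositions and $\pi_0$ is built from the prescribed self-adjoint involution $x\mapsto\bar x$ — so $\sigma^{*}=\sigma^{-1}$; since reversing a reduced word gives a reduced word for the inverse, $l_1(\sigma^{-1})=l_1(\sigma)$ and $l_2(\sigma^{-1})=l_2(\sigma)$, and therefore both $P^{(n)}_{\alpha,q}$ and $P^{(n-1)}_{\alpha,q}$ are self-adjoint. Taking the $\langle\cdot,\cdot\rangle_{0,0}$-adjoint of the first identity then yields $P^{(n)}_{\alpha,q}={R^{(n)}_{\alpha,q}}^{*}(P^{(n-1)}_{\alpha,q}\otimes I)$. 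I expect the main obstacle to be the second step: pinning down the $2n$ minimal coset representatives and verifying their reducedness and minimality, i.e. that the explicit words packaged in $R^{(n)}_{\alpha,q}$ are exactly the length-minimal transversal of $\Sigma(n-1)$ in $\Sigma(n)$.
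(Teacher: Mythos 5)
Your argument is correct and is essentially the approach behind the statement: the paper itself records Proposition~\ref{prop1} without proof, quoting it from \cite{BEH15}, and the proof there (in the spirit of Bo\.zejko--Speicher \cite{BozSpeCoxeter}) is exactly your parabolic-coset decomposition of $\Sigma(n)$ with respect to $\langle \pi_0,\dots,\pi_{n-2}\rangle$, with the $2n$ length-minimal representatives listed in $R^{(n)}_{\alpha,q}$, length additivity upgraded to separate additivity of $l_1$ and $l_2$, and self-adjointness of the generators giving the adjoint identity. The only piece you defer (``one then verifies'' reducedness and the ascent/minimality of the $2n$ words) is the same routine check carried out in the cited source, so there is no substantive gap.
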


\begin{theorem}\label{thm1}For $n\geq 1$, we have
$$
\B(x)= \r(x) R^{(n)}_{\alpha,q},
\qquad x \in H,
$$
where $\r(x)$ is the free right annihilation operator.
\end{theorem}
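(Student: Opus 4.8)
The plan is to verify the operator identity directly on an elementary tensor $v=x_1\otimes\cdots\otimes x_n$, by splitting $R^{(n)}_{\alpha,q}$ into its $\alpha$-free part $A^{(n)}=1+\sum_{k=1}^{n-1}q^k\pi_{n-1}\cdots\pi_{n-k}$ and its $\alpha$-part $B^{(n)}=\alpha q^{n-1}\pi_{n-1}\cdots\pi_1\pi_0\big(1+\sum_{k=1}^{n-1}q^k\pi_1\cdots\pi_k\big)$, and showing separately that $\r(x)A^{(n)}=\r_q(x)$ and $\r(x)B^{(n)}=\alpha\ell^N_q(x)$; summing the two gives $\B(x)=\r_q(x)+\alpha\ell^N_q(x)$, which is exactly the defining expression for $\B(x)$. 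Here $\r(x)$ is the $q=0$ right annihilation operator, so that $\r(x)(x_1\otimes\cdots\otimes x_n)=\langle x,x_n\rangle\, x_1\otimes\cdots\otimes x_{n-1}$, i.e. it contracts $x$ against the last leg only; this is the one property of $\r(x)$ the argument uses.

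The combinatorial heart is to read the two chains of adjacent transpositions as cyclic rotations. First I would record that, under the convention that the rightmost factor acts first, $\pi_{n-1}\cdots\pi_{n-k}$ carries $x_{n-k}$ into the last slot while shifting $x_{n-k+1},\dots,x_n$ one place to the left, so that $\r(x)\,\pi_{n-1}\cdots\pi_{n-k}\,v=\langle x,x_{n-k}\rangle\, x_1\otimes\cdots\otimes\check{x}_{n-k}\otimes\cdots\otimes x_n$. Incorporating the $k=0$ term (the identity, which contracts $x_n$) and reindexing by $j=n-k$ turns $\sum_{k=0}^{n-1}q^k$ into $\sum_{j=1}^{n}q^{n-j}$, which is precisely the defining formula for $\r_q(x)$; this settles the first part.

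For the second part I would establish the analogous rotation identities $\pi_1\cdots\pi_k\,v=x_{k+1}\otimes x_1\otimes\cdots\otimes x_k\otimes x_{k+2}\otimes\cdots\otimes x_n$ (the inner chain pulls $x_{k+1}$ to the front) and $\pi_{n-1}\cdots\pi_1\,w=w_2\otimes\cdots\otimes w_n\otimes w_1$ (the outer chain sends the front leg to the back). Composing these with $\pi_0$, which conjugates the front leg, shows that $x_{k+1}$ becomes $\bar x_{k+1}$ and is moved into the last slot, so that $\r(x)$ contracts it: the $k$-th summand of $\r(x)B^{(n)}v$ equals $\alpha q^{n-1}q^k\langle x,\bar x_{k+1}\rangle\, x_1\otimes\cdots\otimes\check{x}_{k+1}\otimes\cdots\otimes x_n$. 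Reindexing by $j=k+1$ yields $\alpha q^{n-1}\sum_{j=1}^{n}q^{j-1}\langle x,\bar x_j\rangle\, x_1\otimes\cdots\otimes\check{x}_j\otimes\cdots\otimes x_n=\alpha\ell^N_q(x)\,v$, as required.

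The main obstacle is purely bookkeeping: pinning down the exact action of the two transposition chains on tensors and tracking the accompanying powers of $q$, all under the right-action convention adopted in the paper. Once the two rotation identities are proved — most cleanly by a short induction on the length of the chain, or by inspecting the image of a general basis tensor — the matching of coefficients and the reindexing are immediate, and no further input is needed beyond the definitions of $\r_q(x)$, $\ell^N_q(x)$ and $R^{(n)}_{\alpha,q}$; in particular Proposition~\ref{prop1} is not required for this verification.
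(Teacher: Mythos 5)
Your verification is correct. There is nothing in the paper to compare it against: Theorem \ref{thm1} is stated in the ``Additional facts'' part as a property recalled from \cite{BEH15}, with no proof given here. Your argument supplies exactly the missing computation, and it is sound: splitting $R^{(n)}_{\alpha,q}$ into the $\alpha$-free sum and the $\alpha q^{n-1}\pi_{n-1}\cdots\pi_1\pi_0(\cdot)$ part, reading the two chains of adjacent transpositions as cyclic rotations, and reindexing to recover the defining formulas \eqref{rq} and \eqref{lq}. The rotation identities you isolate ($\pi_{n-1}\cdots\pi_{n-k}$ carrying $x_{n-k}$ to the last slot, $\pi_1\cdots\pi_k$ pulling $x_{k+1}$ to the front, $\pi_{n-1}\cdots\pi_1$ sending the front leg to the back) are correct under the paper's convention that the rightmost operator acts first, and they are consistent with the window-notation formulas the paper records in the Remark following the definition of the gauge operator; the powers of $q$ and the single factor $\alpha$ match the coefficients $q^{n-k}$ and $\alpha q^{n-1}q^{k-1}$ in \eqref{rq}--\eqref{lq} after your reindexings. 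Your closing remark is also accurate: Proposition \ref{prop1} is not needed, since the identity is a pointwise computation on elementary tensors and does not use the decomposition of $P^{(n)}_{\alpha,q}$.
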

\begin{lemma} \label{lem4} For $x \in H$, we get 
\begin{align}
&\|R_{\alpha,q}^{(n)}\|_{0,0} \leq (1+|\alpha| |q|^{n-1})[n]_q,\qquad n \geq 1. 
\end{align}
\end{lemma}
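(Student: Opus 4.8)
The plan is to observe that the norm in the statement is taken in the \emph{undeformed} inner product $\langle\cdot,\cdot\rangle_{0,0}$, with respect to which every elementary factor $\pi_i$ appearing in $R^{(n)}_{\alpha,q}$ is unitary; the bound then follows from a bare triangle inequality together with summing a geometric series.

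First I would record that each $\pi_i$ is an isometry of $(H^{\otimes n},\langle\cdot,\cdot\rangle_{0,0})$. For $1\le i\le n-1$ the operator $\pi_i$ merely transposes two tensor legs, hence is a unitary and $\norm{\pi_i}_{0,0}=1$. For $\pi_0=J\otimes \id^{\otimes(n-1)}$, where $Jx=\bar x$, the standing hypothesis that conjugation is a bounded self-adjoint linear involution gives $J^*J=J^2=\id$, so $J$ is unitary and therefore so is $\pi_0$. Consequently every finite word $\pi_{i_1}\cdots\pi_{i_m}$ in the generators $\pi_0,\dots,\pi_{n-1}$ is unitary with $\norm{\pi_{i_1}\cdots\pi_{i_m}}_{0,0}=1$.

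Next I would split $R^{(n)}_{\alpha,q}=A+\alpha q^{n-1}U B$, with
\[
A=1+\sum_{k=1}^{n-1}q^{k}\pi_{n-1}\cdots \pi_{n-k},\quad U=\pi_{n-1}\pi_{n-2}\cdots\pi_1\pi_0,\quad B=1+\sum_{k=1}^{n-1}q^{k}\pi_1\cdots\pi_k,
\]
and estimate each piece by the triangle inequality. Since each summand in $A$ and in $B$ is a scalar multiple of a unitary word,
\[
\norm{A}_{0,0}\le \sum_{k=0}^{n-1}\abs{q}^{k}, \qquad \norm{B}_{0,0}\le \sum_{k=0}^{n-1}\abs{q}^{k},
\]
while $\norm{U}_{0,0}=1$; writing $[n]_q=\sum_{k=0}^{n-1}\abs{q}^{k}$ for this geometric sum (the quantity that naturally appears in norm estimates when $\abs q<1$), submultiplicativity of the operator norm gives
\[
\norm{R^{(n)}_{\alpha,q}}_{0,0}\le \norm{A}_{0,0}+\abs{\alpha}\abs{q}^{n-1}\norm{U}_{0,0}\norm{B}_{0,0}\le \bigl(1+\abs{\alpha}\abs{q}^{n-1}\bigr)[n]_q,
\]
which is the claim.

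The only genuinely nontrivial step is the unitarity of $\pi_0$: it is precisely here that the assumption on the conjugation $x\mapsto\bar x$ is used, and without it the factor $U$ could not be dropped at norm one. Everything else is the triangle inequality and a geometric-series estimate, so I do not anticipate any further obstacle; in particular no properties of the deformed product $\langle\cdot,\cdot\rangle_{\alpha,q}$ are needed, since the whole computation lives in the flat Fock space.
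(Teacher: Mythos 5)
Your proof is correct, and there is nothing in the paper to compare it against in detail: Lemma~3 is only recalled here as one of the ``Additional facts'' from \cite{BEH15}, and your argument is essentially the standard proof from that reference --- unitarity of each generator with respect to $\langle\cdot,\cdot\rangle_{0,0}$ (the transpositions $\pi_i$ trivially, $\pi_0$ because the conjugation is assumed to be a bounded self-adjoint involution), the triangle inequality applied to the decomposition $R^{(n)}_{\alpha,q}=A+\alpha q^{n-1}UB$, and a geometric sum. One remark on notation: you read $[n]_q$ as $\sum_{k=0}^{n-1}|q|^{k}$, i.e.\ as $[n]_{|q|}$, and this is indeed the correct (in fact forced) interpretation, since with the signed $q$-integer $1+q+\dots+q^{n-1}$ the stated bound fails for $q<0$: already for $\alpha=0$, $n=2$ one has $\|1+q\pi_1\|_{0,0}=1+|q|>1+q$, the spectrum of the self-adjoint unitary $\pi_1$ being $\{\pm1\}$. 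So your only ``nontrivial'' ingredient (unitarity of $\pi_0$) and your charitable reading of $[n]_q$ are both exactly what the cited source uses, and the proposal can stand as a self-contained proof of the lemma.
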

\subsection{Gauge operators}In this subsection we define a differential second quantization operator. First we introduce an operator which acts on $(\alpha,q)$-Fock space, as
\begin{align*}
& p_0(T) \Omega = 0, \\
& p_0(T) x_1 \otimes x_2 \otimes \ldots \otimes x_{n-1}\otimes x_n = x_1 \otimes x_2 \otimes \ldots \otimes \ldots \otimes x_{n-1}\otimes (T x_n),
\end{align*}
where $T$ is an operator on $H$ with dense domain $\mc{D}$, where we assume that our involution on $H$ and domain $\mc{D}$ are related by $\overline{\mc{D}}=\mc{D}$. The adjoint of this operator satisfies $\langle p_0(T)f | \zeta \rangle_{0,0}=\langle f | p_0(T^*)\zeta \rangle_{0,0}$, and allows us to define a gauge operator (preservation or differential second quantization). 

\begin{Defn}The \emph{gauge operator} $p(T)$ is an operator on $\F$ with dense domain $\FD$ defined by 
\begin{align*}
& p(T) \Omega = 0, \\
& p(T)  = p_0(T) R^{(n)}_{\alpha,q}.
\end{align*}
\end{Defn}

\noindent We also introduce the notation 
\begin{align}
& \r_q^{T} :=p_0(T)\Bigg(1+\sum_{k=1}^{n-1}q^{k}\pi_{n-1}\cdots \pi_{n-k}\Bigg) \label{rqT}, \\
& \ell_{q}^{N,T}:= p_0(T)q^{n-1}\Bigg(\pi_{n-1} \pi_{n-2} \cdots \pi_{1}\pi_0\left[1+\sum_{k=1}^{n-1}q^{k}\pi_{1}\cdots \pi_{k}\right]\Bigg), \label{lqT}
\end{align}
i.e. $p(T)=\r_q^{T} +\alpha \ell_{q}^{N,T}$. Sometimes we will use the notation $p_{\alpha,q}(T)=p(T)$.
\begin{remark}(1). Let us observe that directly from the generalized braid relations for $k\in [n-1]$, we have
\begin{align*} 
 & \pi_{n-1}\cdots \pi_{n-k}=\bigl(\begin{smallmatrix}
  1 & \cdots & n-k & \cdots & n-1 & n \\
  1 & \cdots & n-k+1 & \cdots & n & n-k
 \end{smallmatrix}\bigr), \textrm{ }
 \pi_{n-1} \pi_{n-2} \cdots \pi_{1}\pi_0\pi_{1}\cdots \pi_{k}=\bigl(\begin{smallmatrix}
   1 & \cdots & k+1 & \cdots & n-1 & n \\
  1 & \cdots  & k+2 & \cdots & n & -(k+1)
 \end{smallmatrix}\bigr)
  \\& \textrm{and }\pi_{n-1} \pi_{n-2} \cdots \pi_{1}\pi_0=\bigl(\begin{smallmatrix}
   1 & 2 &\cdots & n-1 & n \\
  2 & 3 &\cdots & n & -1
 \end{smallmatrix}\bigr).
 \end{align*} 
This observation allows us to rewrite action of $\r_q^{T}$ and $\ell_q^{N,T}$ on $H^{\otimes n}$ as
\begin{align} 
\r_q^{T} (x_1\otimes \cdots \otimes x_n)&= \sum_{k=1}^n q^{n-k}  \, x_1\otimes \cdots \otimes \check{x}_k \otimes \cdots \otimes x_n\otimes T(x_k), \\
\ell_q^{N,T}(x_1\otimes \cdots \otimes x_n)&= q^{n-1}\sum_{k=1}^n q^{k-1} \, x_1\otimes \cdots \otimes \check{x}_k \otimes \cdots \otimes x_n\otimes T(\bar{x}_k). 
\end{align}
(2). The operator $p_{0,q}(T)$ is precisely the same as in \cite{Ans01} (see introduction of this article). Visually, $p_{0,q}(T)$ looks different than in \cite{Ans01}, but they are identical (in the sense that they are isomorphic) because we now use right creators. 
We would also like to emphasize that in the literature one can find other operators of this type but in general they are not symmetric; see \cite{Mol,Sni}. 
\end{remark}
\begin{Prop} \label{Prop:samosprzezone}If $T$ is essentially self-adjoint on a dense domain $\mc{D}$ and $T(\mc{D}) \subset \mc{D}$, then $p(T)$ is essentially self-adjoint on the dense domain $\FD$.
\end{Prop}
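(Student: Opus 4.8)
The plan is to prove symmetry first and then reduce essential self-adjointness to a statement about the single operator $T$ on each tensor level. First I would record that, since $\overline{\mc{D}}=\mc{D}$ and $T(\mc{D})\subset\mc{D}$, the gauge operator maps $\mc{D}^{\otimes n}$ into $\mc{D}^{\otimes n}$, so $p(T)$ is well defined on $\FD$ and preserves the grading. Symmetry of $p(T)$ on $\FD$ is the routine part: using the factorization $p(T)=p_0(T)R^{(n)}_{\alpha,q}$ together with the decomposition $P^{(n)}_{\alpha,q}=(P^{(n-1)}_{\alpha,q}\otimes I)R^{(n)}_{\alpha,q}={R^{(n)}_{\alpha,q}}^{*}(P^{(n-1)}_{\alpha,q}\otimes I)$ of Proposition~\ref{prop1}, and the fact that $p_0(T)=I^{\otimes(n-1)}\otimes T$ commutes with $P^{(n-1)}_{\alpha,q}\otimes I$, one computes for $\xi,\eta\in\mc{D}^{\otimes n}$ that $\langle p(T)\xi,\eta\rangle_{\alpha,q}=\langle p_0(T)P^{(n)}_{\alpha,q}\xi,\,R^{(n)}_{\alpha,q}\eta\rangle_{0,0}$; since $P^{(n)}_{\alpha,q}\xi$ and $R^{(n)}_{\alpha,q}\eta$ lie in $\mc{D}^{\otimes n}$ and $T$ is symmetric on $\mc{D}$, this equals $\langle \xi,p(T)\eta\rangle_{\alpha,q}$.

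Next, because the levels $H^{\otimes n}$ are mutually orthogonal for $\langle\cdot,\cdot\rangle_{\alpha,q}$ and $p(T)$ is block diagonal, I would invoke the standard fact that a symmetric operator which is an algebraic orthogonal direct sum $\bigoplus_n p_n$ is essentially self-adjoint if and only if each block $p_n:=p(T)|_{\mc{D}^{\otimes n}}$ is essentially self-adjoint on $H^{\otimes n}$. For $\alpha,q\in(-1,1)$ the operator $P^{(n)}_{\alpha,q}$ is bounded, positive and invertible, so the two inner products $\langle\cdot,\cdot\rangle_{\alpha,q}$ and $\langle\cdot,\cdot\rangle_{0,0}$ are equivalent on $H^{\otimes n}$; in particular convergence and the notion of a core may be tested in either norm. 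The natural comparison operator is $p_0(T)=I^{\otimes(n-1)}\otimes T$, which is essentially self-adjoint on $\mc{D}^{\otimes n}$ with respect to $\langle\cdot,\cdot\rangle_{0,0}$, its closure being $I^{\otimes(n-1)}\otimes\overline{T}$ (a tensor product of an essentially self-adjoint operator with identities).

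The heart of the argument is the level-$n$ statement. Using Proposition~\ref{prop1} as above one gets $P^{(n)}_{\alpha,q}\,p_n={R^{(n)}_{\alpha,q}}^{*}\,p_0(T)\,P^{(n)}_{\alpha,q}$, and from this a computation of the $(\alpha,q)$-adjoint shows that $\eta\in\mathrm{dom}(p_n^{*})$ if and only if $R^{(n)}_{\alpha,q}\eta\in\mathrm{dom}(I^{\otimes(n-1)}\otimes\overline{T})$, with $p_n^{*}\eta=(I^{\otimes(n-1)}\otimes\overline{T})R^{(n)}_{\alpha,q}\eta$. Since $R^{(n)}_{\alpha,q}$ is bounded and invertible (it equals $(P^{(n-1)}_{\alpha,q}\otimes I)^{-1}P^{(n)}_{\alpha,q}$), essential self-adjointness of $p_n$ is then equivalent to the single assertion that $R^{(n)}_{\alpha,q}\,\mc{D}^{\otimes n}$ is a core for $I^{\otimes(n-1)}\otimes\overline{T}$.

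This last point is where I expect the main obstacle. One cannot simply pull $R^{(n)}_{\alpha,q}$ back to $\mc{D}^{\otimes n}$, because although $R^{(n)}_{\alpha,q}(\mc{D}^{\otimes n})\subset\mc{D}^{\otimes n}$, its inverse involves $(P^{(n)}_{\alpha,q})^{-1}$, which is only a norm limit of polynomials in the generators and need not preserve the algebraic tensor space $\mc{D}^{\otimes n}$; moreover $R^{(n)}_{\alpha,q}$ mixes the last tensor leg with the others, so it does not preserve the graph domain of $I^{\otimes(n-1)}\otimes\overline{T}$. I would resolve this using the explicit permutation description of the summands of $R^{(n)}_{\alpha,q}$ in the Remark together with the invariances $\overline{\mc{D}}=\mc{D}$ and $T(\mc{D})\subset\mc{D}$: for $v$ in the graph domain one approximates each tensor leg separately (the last one in the graph norm of $\overline{T}$, using that $\mc{D}$ is a core for $\overline{T}$, the others in $H$), and checks that the approximants can be chosen of the form $R^{(n)}_{\alpha,q}\xi_k$ with $\xi_k\in\mc{D}^{\otimes n}$ and $(I^{\otimes(n-1)}\otimes\overline{T})R^{(n)}_{\alpha,q}\xi_k$ convergent. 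A more robust alternative is the Glimm--Jaffe--Nelson commutator theorem applied on $\F$ with comparison operator $N=\bigoplus_n\big(1+\sum_{k=1}^{n}I^{\otimes(k-1)}\otimes\overline{T}^{\,2}\otimes I^{\otimes(n-k)}\big)^{1/2}$, where the relative bound $\|p(T)\phi\|\le c\|N\phi\|$ and the commutator bound follow from the norm equivalence and the uniform estimate $\|R^{(n)}_{\alpha,q}\|\le(1+|\alpha|)/(1-|q|)$ implied by Lemma~\ref{lem4}.
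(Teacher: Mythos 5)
Your first half is fine: the symmetry computation via Proposition \ref{prop1} and the commutation of $p_0(T)$ with $P^{(n-1)}_{\alpha,q}\otimes I$ is exactly the paper's argument, and the block-diagonal reduction to each level $H^{\otimes n}$ together with the equivalence of $\langle\cdot,\cdot\rangle_{\alpha,q}$ and $\langle\cdot,\cdot\rangle_{0,0}$ on a fixed level is unobjectionable. The genuine gap is at the point you yourself flag: everything is reduced to the assertion that $R^{(n)}_{\alpha,q}\,\mc{D}^{\otimes n}$ is a core for $I^{\otimes(n-1)}\otimes\overline{T}$ (note also that your ``if and only if'' description of $\mathrm{dom}(p_n^{*})$ already presupposes this core property, so the reduction is slightly circular), and that assertion is never proved. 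The leg-by-leg approximation you sketch does not close it: to have $(I^{\otimes(n-1)}\otimes\overline{T})R^{(n)}_{\alpha,q}\xi_k$ converge you need graph-norm control of $\overline{T}$ on \emph{every} tensor leg of $\xi_k$, because the summands of $R^{(n)}_{\alpha,q}$ move each leg into the last slot, while $R^{-1}_{\alpha,q}v$ for $v$ in the graph domain of $I^{\otimes(n-1)}\otimes\overline{T}$ carries no such information; approximating the last leg in graph norm and the others merely in $H$ gives no control of the mixed terms. The commutator-theorem alternative is also not a proof as stated: the relative bound $\|p(T)\phi\|\le c\|N\phi\|$ is plausible, but the commutator bound does not ``follow from the norm equivalence and the estimate on $\|R^{(n)}_{\alpha,q}\|$'' --- the commutator of $p(T)$ with your $N$ contains terms in which the involution $x\mapsto\bar x$ coming from $\pi_0$ meets $\overline{T}^{\,2}$ on the first leg, and nothing in the hypotheses forces $\overline{T}$ to commute with the involution; moreover your $N$ is self-adjoint for $\langle\cdot,\cdot\rangle_{0,0}$ but not obviously symmetric for $\langle\cdot,\cdot\rangle_{\alpha,q}$, with respect to which $p(T)$ is the symmetric operator. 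So the central analytic step of the proposition remains open in your proposal.

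For comparison, the paper sidesteps all of this with Nelson's analytic vector theorem: it takes simple tensors $\vec{x}=x_1\otimes\cdots\otimes x_n$ with $x_i\in(E_{[-C,C]}H)\cap\mc{D}$ ($E$ the spectral measure of $\overline{T}$), uses Lemma \ref{lem4} and the bound $P^{(n)}_{\alpha,q}\le(1+|\alpha||q|^{n-1})[n]_q\,(P^{(n-1)}_{\alpha,q}\otimes I)$ to get $\|p(T)^k\vec{x}\|_{\alpha,q}\le 2^n n!\,(2nC)^k\|\vec{x}\|_{0,0}$, concludes that such $\vec{x}$ are analytic vectors, and observes that their span is dense and invariant under $p(T)$. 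This is where the hypothesis $T(\mc{D})\subset\mc{D}$ (and the spectral cutoff) actually does its work --- iterating $p(T)$ stays in the domain with controlled norms; your argument never invokes that hypothesis, which is a further sign that the difficulty has been displaced rather than resolved. If you want to keep your core-based strategy, you would need to supply a genuine proof that $R^{(n)}_{\alpha,q}\,\mc{D}^{\otimes n}$ is a core for $I^{\otimes(n-1)}\otimes\overline{T}$, or replace it by an analytic-vector (or spectral-cutoff) argument along the paper's lines.
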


\begin{proof}We first observe that $p_0(T^*) (P^{(n-1)}_{\alpha,q}\otimes I)= (P^{(n-1)}_{\alpha,q}\otimes I)p_0(T^*)$, indeed for $x_1 \otimes x_2 \otimes \ldots \otimes x_n \in \mc{D}^{\otimes n}$, we have
\begin{align*}&p_0(T^*) (P^{(n-1)}_{\alpha,q}\otimes I)(x_1 \otimes \ldots \otimes x_n)=P^{(n-1)}_{\alpha,q}(x_1 \otimes \ldots \otimes x_{n-1})\otimes T^*(x_n)\\&= (P^{(n-1)}_{\alpha,q}\otimes I)(x_1 \otimes \ldots \otimes x_{n-1}\otimes T^*(x_n))=(P^{(n-1)}_{\alpha,q}\otimes I)p_0(T^*) (x_1 \otimes \ldots \otimes x_n).
\end{align*}
Now we show that $p(T)$ is symmetric on $\FD$. Let us fix $n$, and $f,g \in \mc{D}^{\otimes n}$, then
\begin{align*}
\ip{p(T) f}{g}_{\alpha,q} &=\langle p(T) f, P^{(n)}_{\alpha,q} g \rangle_{0,0} =\langle p_0(T) R^{(n)}_{\alpha,q} f,  (P^{(n-1)}_{\alpha,q}\otimes I) R^{(n)}_{\alpha,q}g \rangle_{0,0} \\
&= \langle R^{(n)}_{\alpha,q} f, p_0(T^*) (P^{(n-1)}_{\alpha,q}\otimes I) R^{(n)}_{\alpha,q}g \rangle_{0,0}
 = \langle R^{(n)}_{\alpha,q} f,  (P^{(n-1)}_{\alpha,q}\otimes I)p_0(T^*) R^{(n)}_{\alpha,q}g \rangle_{0,0}
\intertext{by Proposition \ref{prop1}, we have}
&=\langle  f,  {R^{(n)}_{\alpha,q}}^*(P^{(n-1)}_{\alpha,q}\otimes I)p_0(T^*) R^{(n)}_{\alpha,q}g \rangle_{0,0}
= \ip{f}{p(T^*)g}_{ \alpha,q}.
\end{align*}

\noindent
Now we show that $T$ is essentially self-adjoint. Let $E$ be the spectral measure of the closure of $T$ and $C \in \mf{R}_+$. Let $\set{x_i}_{i=1}^n \subset (E_{[-C. C]} H) \cap \mc{D}$, $\vec{x} = x_1 \otimes x_2 \otimes \ldots \otimes x_n$, then $\norm{T x_i} \leq C \norm{x_i}$ and
\begin{align*}
&\ip{p(T) \vec{x}}{ p(T) \vec{x}}_{0,0}=\ip{p_0(T){R^{(n)}_{\alpha,q}} \vec{x}}{ p_0(T){R^{(n)}_{\alpha,q}} \vec{x}}_{0,0}\leq C^2 \norm{{R^{(n)}_{\alpha,q}}\vec{x}}_{0,0}^2 
\intertext{by Lemma \ref{lem4}, we have}
&\leq \big(C(1+|\alpha| |q|^{n-1})[n]_q \norm{\vec{x}}_{0,0}\big)^2 
\leq \big(2nC \norm{\vec{x}}_{0,0}\big)^2.
\end{align*}
Thus we get the following estimation for the norm of $p(T)^k $
\begin{align*}
&\norm{p(T)^k \vec{x}}_{ \alpha,q}^2 = \ip{p(T)^k \vec{x}}{P_{ \alpha,q}^{(n)} p(T)^k \vec{x}}_{0,0} 
\leq \norm{P_{ \alpha,q}^{(n)}}_{0,0}^2\ip{p(T)^k \vec{x}}{ p(T)^k \vec{x}}_{0,0}\\&
\leq \norm{P_{ \alpha,q}^{(n)}}_{0,0}^2 (2^kn^k C^k \norm{\vec{x}}_{0,0})^2.
\end{align*}
Now, we use the estimations from the proof of \cite[Theorem 2.9, equation (2.32)]{BEH15} i.e. 
\begin{equation}
\begin{split}
P_{\alpha,q}^{(n)} \leq (1+|\alpha\|q|^{n-1})[n]_q (P_{\alpha,q}^{(n-1)}\otimes I) ,
\end{split}
\end{equation}
with respect to the $(0,0)$-inner product, so $\norm{P_{ \alpha,q}^{(n)}}_{0,0} \leq \prod_{i=1}^n (1+|\alpha||q|^{i-1})[i]_{q} \leq 2^n n!$. It can be shown that $\norm{p(T)^k \vec{x}}_{\alpha,q} \leq {2^n n!}2^k n^k C^k \norm{\vec{x}}_{0,0}$, so the  series $\sum_{k=0}^\infty\frac{p(T)^k \vec{x}}{k!}s^k$ has a positive radius of absolute convergence, because 
\[
\limsup_{k \rightarrow \infty}\sqrt[k]{ \frac{\norm{p(T)^k \vec{x}}_{ \alpha,q}}{k!}}\leq \limsup_{k \rightarrow \infty}\sqrt[k]{ \frac{{2^n n!} 2^kn^k C^k \norm{\vec{x}}_{0,0}}{k!}} = 0.
\]
Therefore $\vec{x}$ is an analytic vector for $p(T)$. The linear span of such vectors is invariant under $p(T)$ and is a dense subset of $\mc{D}^{\otimes n}$. Therefore by Nelson's analytic vector theorem \cite{Nel59} (see also \cite{ReeSim1}), $p(T)$ is essentially self-adjoint on $\mc{D}^{\otimes n}$.
\end{proof}

\begin{Prop} If $T$ is a bounded operator on $H$, then $p(T)$ is a bounded operator on the $(\alpha,q)$-Fock space.
\label{Ansh+}
\end{Prop}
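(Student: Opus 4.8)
The plan is to use that $p(T)$ respects the grading of $\F$: the degrees $H^{\otimes m}$ and $H^{\otimes n}$ are orthogonal in $\ip{\cdot}{\cdot}_{\alpha,q}$ for $m\ne n$, and $p(T)=p_0(T)R^{(n)}_{\alpha,q}$ maps $H^{\otimes n}$ into itself, so $\norm{p(T)}_{\alpha,q}=\sup_n\norm{p(T)\!\restriction_{H^{\otimes n}}}_{\alpha,q}$ and it suffices to bound each block uniformly in $n$. Fix $n\ge 1$ and abbreviate $P:=P^{(n)}_{\alpha,q}$, $Q:=P^{(n-1)}_{\alpha,q}\otimes I$ and $R:=R^{(n)}_{\alpha,q}$, so that Proposition~\ref{prop1} reads $P=QR=R^{*}Q$ (adjoints taken in $\ip{\cdot}{\cdot}_{0,0}$), and recall from the proof of Proposition~\ref{Prop:samosprzezone} that $p_0(T)$ commutes with $Q$, hence with every bounded Borel function of the positive operator $Q$. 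For this fixed $n$ the operator $P$ block-diagonalises into finitely many distinct positive-definite matrices (indexed by the pattern of coincidences among the basis labels and their signs under the $\pm1$ eigenspaces of the involution), so $Q\ge c_nI>0$ and both $Q^{1/2}$ and $Q^{-1/2}$ are bounded on $H^{\otimes n}$.

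The key idea is to straighten $\ip{\cdot}{\cdot}_{\alpha,q}$ into $\ip{\cdot}{\cdot}_{0,0}$ by a positive square root and then use self-adjointness to neutralise the constant $c_n$. I would set $\widetilde R:=Q^{-1/2}PQ^{-1/2}$. From $P=QR$ one gets $\widetilde R=Q^{1/2}RQ^{-1/2}$, so $\widetilde R$ is similar to $R$; from $P=R^{*}Q$, and hence $R^{*}=QRQ^{-1}$, one gets $\widetilde R^{*}=Q^{-1/2}R^{*}Q^{1/2}=Q^{-1/2}(QRQ^{-1})Q^{1/2}=\widetilde R$, so $\widetilde R$ is self-adjoint, and it is positive because $P$ is. Since $P=Q^{1/2}\widetilde RQ^{1/2}$, the map $U:=\widetilde R^{1/2}Q^{1/2}$ is a bijective isometry from $(H^{\otimes n},\ip{\cdot}{\cdot}_{\alpha,q})$ onto $(H^{\otimes n},\ip{\cdot}{\cdot}_{0,0})$, as $\norm{Uf}_{0,0}^2=\ip{f}{Q^{1/2}\widetilde RQ^{1/2}f}_{0,0}=\ip{f}{Pf}_{0,0}=\norm{f}_{\alpha,q}^2$. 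Conjugating $p(T)=p_0(T)R$ and using $Q^{1/2}p_0(T)=p_0(T)Q^{1/2}$ together with $Q^{1/2}RQ^{-1/2}=\widetilde R$ yields the clean identity
\[
U\,p(T)\,U^{-1}=\widetilde R^{1/2}\,p_0(T)\,\widetilde R^{1/2}.
\]

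Consequently the $(\alpha,q)$-norm of $p(T)$ on $H^{\otimes n}$ equals $\norm{\widetilde R^{1/2}p_0(T)\widetilde R^{1/2}}_{0,0}\le\norm{\widetilde R}_{0,0}\norm{p_0(T)}_{0,0}=\norm{\widetilde R}_{0,0}\norm{T}$, since $p_0(T)=I^{\otimes(n-1)}\otimes T$ has $(0,0)$-norm $\norm{T}$. The decisive step is that $\widetilde R$ is self-adjoint, so $\norm{\widetilde R}_{0,0}$ is its spectral radius; being similar to $R$ it shares $R$'s spectrum, whence $\norm{\widetilde R}_{0,0}$ equals the spectral radius of $R$ and is at most $\norm{R}_{0,0}\le(1+|\alpha||q|^{n-1})[n]_q\le(1+|\alpha|)/(1-|q|)$ by Lemma~\ref{lem4} and $|q|<1$. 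This bound is independent of $n$ and of $c_n$, giving $\norm{p(T)}_{\alpha,q}\le\norm{T}(1+|\alpha|)/(1-|q|)$ on all of $\F$. The main obstacle is exactly this last point: any naive estimate loses a factor $\norm{Q^{1/2}}_{0,0}\norm{Q^{-1/2}}_{0,0}$ that grows with $n$ (indeed $\norm{P^{(n)}_{\alpha,q}}_{0,0}$ is unbounded in $n$), and it is the self-adjointness of $\widetilde R$ — turning its norm into a similarity-invariant spectral radius — that removes this dependence and delivers a uniform bound.
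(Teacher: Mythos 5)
Your proof is correct in substance but takes a genuinely different route from the paper's. The paper never inverts the symmetrizers: it uses the symmetry of $p(T)$ from Proposition \ref{Prop:samosprzezone} to get the intertwining $P_{\alpha,q}\,p(T^{*})=p(T)^{*}P_{\alpha,q}$ (adjoint in the $(0,0)$-inner product), deduces by a square-root trick the operator inequality $P_{\alpha,q}\,p(T^{*})p(T)\leqslant \|p(T^{*})\|_{0,0}\|p(T)\|_{0,0}\,P_{\alpha,q}$, hence $\|p(T)\|_{\alpha,q}\leqslant\sqrt{\|p(T^{*})\|_{0,0}\|p(T)\|_{0,0}}$, and then gets the uniform-in-$n$ bound exactly where you do, from Lemma \ref{lem4} via $\|p(T)f\|_{0,0}\leqslant\|T\|\,\|R^{(n)}_{\alpha,q}f\|_{0,0}$. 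You instead straighten the inner product with $U=\widetilde R^{1/2}Q^{1/2}$, realize $p(T)$ as the manifestly self-adjoint operator $\widetilde R^{1/2}p_0(T)\widetilde R^{1/2}$ on the undeformed space, and control $\|\widetilde R\|_{0,0}$ by the spectral radius of $R^{(n)}_{\alpha,q}$ through similarity; the conjugation identity, the self-adjointness of $\widetilde R$, and the spectral-radius step are all correct and yield essentially the same constant. The trade-off is this: the paper's argument needs only positivity of $P^{(n)}_{\alpha,q}$, whereas yours needs the strict lower bound $Q=P^{(n-1)}_{\alpha,q}\otimes I\geq c_nI>0$ (and, for bijectivity of $U$, also $P^{(n)}_{\alpha,q}\geq c_n'I$), which is not stated anywhere in this paper; in infinite dimensions positivity plus injectivity does not give bounded invertibility, so your one-sentence block-diagonalization claim (finitely many coincidence/sign patterns for fixed $n$, each block a strictly positive finite matrix) is doing real work and should be written out, or replaced by a citation to strict positivity estimates of Bo\.zejko--Speicher type or \cite{BEH15}. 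You could also sidestep invertibility of $P^{(n)}_{\alpha,q}$ entirely by using only the intertwining $U\,p(T)=\widetilde R^{1/2}p_0(T)\widetilde R^{1/2}\,U$ together with $\|Uf\|_{0,0}=\|f\|_{\alpha,q}$, which does not require $U$ to be onto. In short: correct, a nice alternative, but slightly heavier hypotheses than the paper's more economical operator-inequality argument.
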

\begin{proof} We begin by showing that $p(T)$ is bounded in $\mathcal{F}_{0,0}(H)$. Next we show that $p(T)$ is bounded. Using Proposition \ref{Prop:samosprzezone} it can be shown that $P_{\alpha,q}p(T^{*})=p(T)^{*}P_{\alpha,q}$ where $p(T)^{*}$ is taken with respect to the $(0,0)$-inner product. Indeed, for $f,g \in H^{\otimes n}$, we have  \begin{align*}
&\langle f, p(T^*) g \rangle_{\alpha,q}= \ip{ f}{P^{(n)}_{\alpha,q} p(T^*)g}_{0,0}
\intertext{ by Proposition \ref{Prop:samosprzezone}, we get} 
&=\ip{p(T) f}{g}_{\alpha,q} =\langle p(T) f, P^{(n)}_{\alpha,q} g \rangle_{0,0}= \langle f, p(T)^*P^{(n)}_{\alpha,q} g \rangle_{0,0}.
\end{align*}
This gives us $P_{\alpha,q}p(T^{*})p(T)=p(T)^{*}P_{\alpha,q}p(T)\geqslant 0 $ and 
$$
P_{\alpha,q}p(T^{*})p(T) [p(T^{*})p(T)]^*P_{\alpha,q}\leqslant \|p(T^{*})p(T) [p(T^{*})p(T)]^*\|_{0,0}P_{\alpha,q}^2.
$$
By taking the square root of the operators from above inequality, we get
\begin{align*} \nonumber
P_{\alpha,q}p(T^{*})p(T)\leqslant \sqrt{\|p(T^{*})p(T) [p(T^{*})p(T)]^*\|_{0,0}}P_{\alpha,q} \leqslant \|p(T^{*})\|_{0,0}\|p(T)\|_{0,0}P_{\alpha,q}. \label{eq:nierownosc1}
\end{align*}
If we take $f\in H^{\otimes n}$, then we get
\begin{align*} \nonumber
&\langle p(T)f | p(T)f \rangle_{\alpha,q} = \langle f | p(T^*)p(T)f \rangle_{\alpha,q} = \langle f | P_{\alpha,q}p(T^*)p(T)f \rangle_{0,0}. 
&
\end{align*}
It is clear by the definition of $p_{0}$ that $\|p_{0}\|_{0,0}\leqslant \|T\|$, and thus 
\begin{align*} &\|p(T)f\|_{0,0}=\|p_0(T)R^{(n)}_{\alpha,q} f\|_{0,0}\leqslant \|p_0\|_{0,0}\| R^{(n)}_{\alpha,q}f\|_{0,0}.
\intertext{Now we use the estimation from Lemma \ref{lem4} and we get}
& \leqslant \|T\|(1+|\alpha| |q|^{n-1})[n]_q\|f\|_{0,0} \leqslant \max\{1+|\alpha| ,(1+|\alpha| )/(1-q)\}\|T\|\|f\|_{0,0}.
\end{align*}
Finally, since $\|T^*\|=\|T\|$, we conclude that
$$
\| p(T) \|_{\alpha,q} \leqslant \sqrt{\|p(T^*)\|_{0,0}\|p(T)\|_{0,0}} \leqslant (1+|\alpha| )\max\{1,1/(1-q)\}\|T\|.
$$
\end{proof}

\begin{remark}In order to keep the essentially self-adjoint operators, we should make an additional assumption on the family of operators $\set{T_j}_{j=1}^n$ see \cite[Subsection 2.5]{Ans01}, but we do not, because the further computation can be done purely algebraically. \textbf{In the remainder of the article we assume that the operator $T$ is bounded and self-adjoint.} We notice that in fact we have proved a more general Proposition \ref{Prop:samosprzezone}, because in a forthcoming paper we are going to investigate L\'evy processes of type B.
\end{remark}

\subsection{Partitions of type B}Let $[n]$ be the set $\{1,\dots,n\}$. For an ordered set $S$, denote by $\Part(S)$ the lattice of set partitions of that set. We write $B \in \pi$ if $B$ is an element of $\pi$ and we say that $B$ is a \emph{block of $\pi$}. A block of $\pi$ is called a \emph{singleton} if it consists of one element, and let $Sing(\pi)$ be the set of singletons of a set partition $\pi$. Given a partition $\pi$ of the set $[n]$, we write $\Semi(\pi)$ for the set of pairs of integers $(i, j)$ which occur in the same block of $\pi$ such that $j$ is the smallest element of the block greater than $i$. The same notation $\Semi(B)$ is applied to a block $B\in \pi$. Thus, when we draw the points of a block then we think that consecutive elements in every block (bigger than one) are connected by arcs above the $x$ axis -- see Figure \ref{Polkola}.
   
\begin{figure}[ht]
\begin{center}

\includegraphics[width=0.7\textwidth]{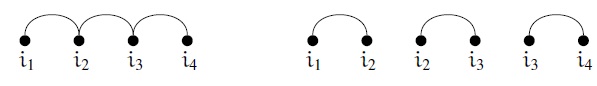}
\caption{ The example of a block and corresponding arcs.
}
\label{Polkola}
\end{center}
\end{figure}

\begin{definition}We call $\pi_f$ a \emph{type-B set partition of $[n]$} if $\pi$ is a set partition of $[n]$ and $f:\Semi(\pi)\cup \Sing(\pi)\to \{\pm 1\}$ is a coloring of the singletons or arcs. We denote by $\PB(n)$ the set of all type-B partitions of $[n]$, such that 
\begin{enumerate}
\item each singleton is necessarily colored by $1$;
\item each arc is colored by $\pm 1$ (see Figure \ref{RestrictedCrossingsColorxxx}).
\end{enumerate}
\end{definition}
\begin{figure}[ht]
\begin{center}
\includegraphics[width=0.8\textwidth]{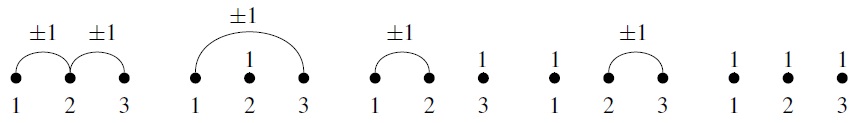}
\caption{$\PB(3)$. } \label{RestrictedCrossingsColorxxx}
\end{center}
\end{figure} 

When $n$ is even, we call $\pi_f \in \PB(n)$ a \emph{type-B pair partitions of $[n]$} if $\pi$ is a pair partition, i.e. each block consists of one arc. The set of type-B pair partitions of $[n]$ is denoted by $\PB_2(n)$ and the set of type-B partitions without singletons of $[n]$ is denoted by $\PB_{\geq 2}(n)$. 
\begin{remark}
\begin{enumerate}[\rm(1)]
\item Our notation $\pi_f$ should be understood in the sense  that $\pi_f=(\pi,f)$, where $\pi$ is a set partition of $[n]$ and $f$ is a coloring.
\item Our definition of set partitions of type B is different from \cite{BGN03,ChV06,R97,Sim00,RS10}. 
\item The set of all partitions $\P(n)$ is a subset of $\PB(n)$. The relationship between them can be written as $\pi\in \P(n)\cap \PB(n) \iff $ all arcs have color $1$.
\item In some sense, the above definition of type-B partitions is compatible with the groups $\Sigma(n)$. The Coxeter group of type B can be written as $\Sigma(n)=\Z_2^n \rtimes S(n) $ and hence it can be defined as all signed permutations of the numbers $\pm1,\dots,\pm n.$ Thus $\Sigma(n)$ consists of all signed permutations with signed entries in their window notation, i.e. we can assign arbitrary signs $\pm1$ to the points $1,2,\dots,n$. Hence, the above construction of the colorings of type-B partitions is similar to that of signed permutations, because we assign color $\pm 1$ to every arc. 
\end{enumerate}
\end{remark}
\noindent
\textbf{Blocks of type B.} Let $\pi_f\in\PB(n)$, then we denote by $ B_c\in \pi_f$ each colored block of type B, where $B=\{i_1,\dots,i_m\}$ and $c=(c_{1},\dots,c_{{m-1}})$, $c_{j}\in \{\pm 1\}$ is the color of arc $\{i_j,i_{j+1}\}$ and 
\begin{align*}&\Semi(B_c)=\big\{\{i_j,i_{j+1}\}_{c_{j}}\mid j\in[m-1]\big\}, \quad (m\geq 2),
\intertext{and} 
&\Semi(\pi_f)=\cup_{B_c\in\pi_f}\Semi(B_c).
\end{align*}
If $B$ is a singleton, then sometimes we do not write its color, i.e. $B:=B_{(1)}$, because each singleton is necessarily colored by $1$ and of course the same remark applies to $\Sing(\pi):=\Sing(\pi_f)$. To be clear, we also denote by $\max(B_c):=\max(B)$ its last element and by $\min(B_c):=\min(B)$ its first element. We order the blocks of $\pi_f=\{B^1_{c_1},\dots,B^l_{c_l}\}$ according to the order of their last elements, i.e. $\max(B^1_{c_1}) < \max(B^2_{c_2}) < \ldots < \max(B^l_{c_l})$.

\subsection{Statistics}Now, we introduce some partition statistics. Let $ B_{c}, \widetilde{B}_{\widetilde{c}}\in \pi_f $ with $\# B_{c},\# \widetilde{B}_{\widetilde{c}} \geq 2$, where $\pi_f\in \PB(n)$. 

\noindent \textbf{\emph{Restricted crossings}}. Note that in the definition below the colorings of arcs are not important. Thus we use the same definition of \emph{restricted crossings} as given in Biane \cite{Bia97}. We say that the arc $\{i,j\}_c$ \emph{is crossing} the arc $\{i',j'\}_{c'}$ if  $i < i' < j < j'  \textrm{ or } i' < i < j' < j $. Now we can define
\begin{align*}
\rc(B_{c},\widetilde{B}_{\widetilde{c}})
= &\# \big\{(V,W)\in \Semi(B_{c})\times \Semi(\widetilde{B}_{\widetilde{c}}) \mid
 \textrm{ such that } V \textrm{ is crossing } W \big\}.
\end{align*}
The number of restricted crossings of $\pi$ is $$\rc{(\pi)}=\rc{(\pi_f)} = \sum_{i<j} \rc(B^i_{c_i}, B^j_{c_j}),$$ where $\pi_f\setminus \Sing(\pi)=\{B^1_{c_1},\dots,B^l_{c_l}\}$.

\noindent \textbf{\emph{Restricted negative nestings}}. 
 Now we define the number of \emph{restricted negative nestings} of the partition $\pi_f\in\PB(n)$. We say that an arc $\{i,j\}_c$ \emph{nests} $\{i',j'\}_{c'}$ if $i <k <j$ for any $k\in \{i',j'\}_{c'}$. The set of nestings of $B_{c},\widetilde{B}_{\widetilde{c}}$ is
\begin{align*} \nest(B_{c},\widetilde{B}_{\widetilde{c}})=&\big\{(V,W)\in \Semi(B_{c})\times \Semi(\widetilde{B}_{\widetilde{c}})\mid V \text{~nests~} W\textrm{ } or \textrm{ } W \text{~nests~} V\big\},
\end{align*}
and the set of nesting of $\pi$ is 
\begin{align*}\nest(\pi)=\nest(\pi_f) = \cup_{i<j} \nest(B^i_{c_i}, B^j_{c_j}),\end{align*} 
where $\pi_f\setminus \Sing(\pi)=\{B^1_{c_1},\dots,B^l_{c_l}\}$. The number of \emph{restricted negative nestings} is defined by 
\begin{align*}
\InNB(\pi_f)=&\#\{ (V,W)\in \nest(\pi) \mid  V \text{~nests~} W \textrm{ and } f(W)=-1
\textrm{ or }\textrm{ } W \text{~nests~} V \textrm{ and } f(V)=-1\}.
\end{align*}

\noindent \textbf{\emph{Negative arcs}}. Let $\Semi(\pi_f,-1) :=\{W\in\Semi(\pi_f)\mid f(W)=-1\}$, then 
$$\NB(\pi_f)=\#\Semi(\pi_f,-1),$$
is the number of negative arcs. 
\begin{remark} In the above equation, we skip the index $f$ in $\pi$ if our statistic does not depend on coloring. 
\end{remark}

\section{A deformed probability of type B}
\subsection{Operators and cumulants of type B}In this non-commutative setting, random variables are understood to be the elements of the $*$-algebra generated by $\{\B(x),\B^{*}(x),p(T_x)\mid x\in H_\R\}$. Particularly interesting are their joint mixed moments, i.e. expressions
$$\state\big( [\B(x_n) +\B^\ast(x_n)+p(T_{x_n})]\dots [\B(x_1) +\B^\ast(x_1)+p(T_{x_1})]\big).$$
In order to work effectively on this object we need to define corresponding cumulants. This topic in the case of $q$-deformed Fock space was deeply analyzed in the literature; see \cite{Nic95,Nic96,Ans01,L05}. One of the first definitions of cumulants appropriate for $q$-deformed probability theory has already been given in \cite{Nic95}, based on an analog of the canonical form introduced by Voiculescu in the context of free probability. The advantage of the approach of that paper is that Nica's cumulants are defined for any probability distribution  all of whose moments are finite, however, the canonical form in that paper is not self-adjoint. 
Also Lehner \cite{L05} developed some general formulas, computed cumulants and partitioned cumulants for generalized Toeplitz operators. Our approach is close to \cite{HudPar,SchurCondPos,Ans01}. We provide an explicit formula for the combinatorial cumulants, involving the number of restricted crossings and negative nestings of a partition. First, we need to define the operators and cumulants, where we asume that $T_x$ are fixed bounded self-adjoint operators on $H$ indexed by $x\in H_\R$.
\begin{Defn}The operator 
\begin{equation}
\X(x):= \B(x) +\B^\ast(x)+p(T_x)+\lambda I,\qquad x \in H_\R, \quad\lambda\in\R,
\end{equation}
on $\F$ is called  \emph{ operator of type B} and $\mathcal{B}_{\alpha,q}:=\mathcal{B}_{\alpha,q}^0$. 
\end{Defn}
\begin{Defn}\label{Defi:Cumulant} Let $\pi_f\in \PB(n)$, $B_c=\{i_1,\dots,i_m\}_{(c_{1},\dots,c_{{m-1}})} \in \pi_f$, 
 $\mb{x}=(x_1,\dots,x_n) \in H_\R^n$
 and $\lambda_i\in \R$. \emph{The deformed cumulant of type B} is defined by
\begin{align*}
&\Cum({B_c}) := 
\begin{cases}
 \lambda_{\min(B_c)} & \text{ if } \#B_c=1, \\
\langle x_{\max(B_c)}, \c_{{m-1}}T_{x_{i_{m-1}}} \dots \c_{3}T_{x_{i_3}}\c_{2}T_{x_{i_2}} \c_{1}x_{\min(B_c)} \rangle & \text{ if } \#B_c\geq 2,
\end{cases}
\\
&\Cum_{\pi_f}:=\prod_{B_c \in \pi_f}\Cum({B_c}),
\end{align*}
where $\c_i$ is a color operator $\c_i:H \mapsto H$ defined as follows: 
\[
\c_i(x) = 
\begin{cases}
x & \text{ if } c_i=1, \\
\overline{x} & \text{ if } c_i=-1.
\end{cases}
\]
\end{Defn}

\noindent
The following theorem is the main result of the paper. Its proof is given in Section 3.3. 
\begin{theorem}\label{thm2} Suppose that $\mb{x}=(x_1,\dots,x_n) \in H_\R^n$, then 
\begin{align} 
\state\big( \mathcal{B}^{\lambda_n}_{\alpha,q}(x_n)\cdots \mathcal{B}^{\lambda_1}_{\alpha,q}(x_{ 1})\big)=
\displaystyle\sum_{\pi_f\in \PB(n)} \alpha^{\NB(\pi_f)}q^{\rc(\pi)+2 \InNB(\pi_f)}\Cum_{\pi_f}. \label{wick:glowna}
\end{align}
\end{theorem}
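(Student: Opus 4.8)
The plan is to compute the mixed moment
\[
\state\big(\X^{\lambda_n}(x_n)\cdots\X^{\lambda_1}(x_1)\big)
=\langle\Omega, \X^{\lambda_n}(x_n)\cdots\X^{\lambda_1}(x_1)\Omega\rangle_{\alpha,q}
\]
by expanding each factor $\X^{\lambda_i}(x_i)=\B^\ast(x_i)+\B(x_i)+p(T_{x_i})+\lambda_i I$ into its four constituents and tracking which choices survive the vacuum expectation. Since the whole computation can be done with respect to the easier flat inner product $\langle\cdot,\cdot\rangle_{0,0}$ after moving the symmetrization operator, my first step would be to use Theorem \ref{thm1} and the gauge definition $p(T)=p_0(T)R^{(n)}_{\alpha,q}$ to rewrite every annihilation and gauge factor so that the deformation is carried entirely by the operators $R^{(n)}_{\alpha,q}$; the creation operator $\B^\ast(x)$ already acts as the plain right tensoring. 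The goal is a sum over sequences of operator choices, which I would then reorganize as a sum over combinatorial diagrams.

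Next I would set up the bijection between surviving operator sequences and type-B set partitions $\pi_f\in\PB(n)$. Reading right to left, a creation $\B^\ast(x_i)$ opens a new strand, a gauge $p(T_{x_i})$ continues an existing strand while inserting $T_{x_i}$ (and possibly applying the bar-involution coming from the negative part $\alpha\ell_q^{N,T}$), and an annihilation $\B(x_i)$ closes a strand by pairing against an earlier creation, contracting via $\langle\cdot,\cdot\rangle$. The $\lambda_i I$ terms produce singletons. Each maximal strand becomes a block $B_c=\{i_1,\dots,i_m\}$; the positions where annihilation/gauge meet a bar give the color data $c=(c_1,\dots,c_{m-1})$, and one reads off exactly the cumulant $\Cum(B_c)=\langle x_{\max(B_c)},\c_{m-1}T_{x_{i_{m-1}}}\cdots\c_1 x_{\min(B_c)}\rangle$ of Definition \ref{Defi:Cumulant}. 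The no-singleton-except-$\lambda$ structure and the requirement that singletons carry color $1$ fall out of the fact that a bare annihilation on the vacuum gives zero and that $\ell_q^N$ always applies one bar.

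The heart of the argument is the weight bookkeeping: I must show that summing the $q$- and $\alpha$-coefficients produced by the operators $R^{(n)}_{\alpha,q}$ along a fixed diagram yields exactly $\alpha^{\NB(\pi_f)}q^{\rc(\pi)+2\InNB(\pi_f)}$. The structure of $R^{(n)}_{\alpha,q}$ from Proposition \ref{prop1} is the key: its ``positive'' part $1+\sum_k q^k\pi_{n-1}\cdots\pi_{n-k}$ records how far an element is pulled past others, and each such passage past an arc of the already-built diagram contributes one restricted crossing, reproducing $q^{\rc(\pi)}$ exactly as in the $q$-case of Anshelevich. The ``negative'' part, carrying the prefactor $\alpha q^{n-1}$ and the reflection $\pi_{n-1}\cdots\pi_1\pi_0(1+\sum_k q^k\pi_1\cdots\pi_k)$, is what produces a negatively-colored arc: each use contributes one factor of $\alpha$, explaining $\alpha^{\NB(\pi_f)}$, while its $q$-powers account for the extra $q^{2\InNB(\pi_f)}$. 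The factor $2$ is the main obstacle and the genuinely new phenomenon of type B: I would argue, by induction on $n$ peeling off the rightmost operator and using the recursion $P^{(n)}_{\alpha,q}=(P^{(n-1)}_{\alpha,q}\otimes I)R^{(n)}_{\alpha,q}$, that when a negative arc is nested inside another arc the reflection forces the negative strand to travel past that nesting arc \emph{twice} (once on the way down through $\pi_0$ and once back), each passage weighted by $q$, so a nested negative arc accrues $q^2$ rather than $q$; this is precisely the combinatorial content of $2\InNB(\pi_f)$.

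I would carry out the weight analysis by the induction just described: assume the formula for moments of length $n-1$, apply the rightmost factor $\X^{\lambda_1}(x_1)$, and match the three contributions (open a block, extend a block positively, extend a block negatively, or create a singleton) against the three ways the diagram $\pi_f$ on $[n]$ restricts to a diagram on $\{2,\dots,n\}$. The delicate step is verifying that the $q$-exponent changes by exactly the number of newly created crossings plus twice the number of newly created negative nestings; here I expect to lean on the explicit permutation identities for $\pi_{n-1}\cdots\pi_{n-k}$ and $\pi_{n-1}\cdots\pi_1\pi_0\pi_1\cdots\pi_k$ recorded in the first remark after the gauge definition, which give precisely the cycle that shifts the inserted element into its block position and, in the negative case, reflects it to the mirror slot $-(k+1)$. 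Once the exponent matching is established for a single new arc, multiplicativity of $\Cum_{\pi_f}=\prod_{B_c}\Cum(B_c)$ over blocks and additivity of $\rc$, $\InNB$, $\NB$ over pairs of blocks complete the induction and yield \eqref{wick:glowna}.
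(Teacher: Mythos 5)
Your global picture (expand each factor, biject the surviving terms with colored diagrams built from strands, read the weights off $R^{(n)}_{\alpha,q}$, and the ``down-and-back'' $q^{2(k-1)}$ extra cost hidden in $\ell^N_q$) is the same as the paper's, but the induction you propose does not close as stated. You induct on vacuum \emph{moments}, peeling off the rightmost factor $\X^{\lambda_1}(x_1)$ and matching against restrictions of $\pi_f$ to $\{2,\dots,n\}$. Acting on $\Omega$, that factor only produces $\lambda_1\Omega$ or the one-particle vector $x_1$; in the latter case the remaining quantity $\langle\Omega,\X^{\lambda_n}(x_n)\cdots\X^{\lambda_2}(x_2)\,x_1\rangle_{\alpha,q}$ is \emph{not} a length-$(n-1)$ vacuum moment (element $1$ is the minimum of a block of size $\geq 2$, and deleting it leaves an open strand), so the induction hypothesis does not apply to it. The paper avoids this by proving the stronger Theorem \ref{twr:momentogolne}: a formula for the whole vector $\B^{\epsilon(k)}(x_k)\cdots\B^{\epsilon(1)}(x_1)\Omega$ as a sum over \emph{extended} partitions $\PB_{E;\epsilon}(k)$, in which the unfinished strands survive as primed blocks and singletons contributing the tensor factor $\CummTensor$; Theorem \ref{thm2} then follows by taking the vacuum expectation (which kills every term with an open strand) plus a short separate induction absorbing the $\lambda_i I$ terms as singletons.

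More seriously, the ``delicate step'' you plan to verify --- that at each stage the $q$-exponent grows by (new crossings) plus twice (new negative nestings) --- is false stepwise. When the new arc $\{\mathbf{i},k+1\}$ is attached positively, $\r_q$ contributes $q^{r}$, and negatively, $\alpha\ell^N_q$ contributes $\alpha q^{2p+r}$, where $p$ and $r$ count \emph{open} strands to the left and right of $\mathbf{i}$; these are not the numbers $u$ of crossings and $t$ of nested negative arcs created at that step. The identification with $\rc(\pi)+2\InNB(\pi_f)$ only holds globally, after all strands have closed, and the paper makes this precise by carrying two auxiliary statistics through the induction, $\InS$ (maxima of open blocks covered by arcs) and $\SLNB$ (maxima of open blocks to the left of negative arcs), with exponent $\rc(\pi)+\InS(\pi)+2\InNB(\pi_f)+2\SLNB(\pi_f)$: in the positive case, for instance, $\rc\mapsto\rc+u$, $\InS\mapsto\InS-u+r$, $\InNB\mapsto\InNB+t$, $\SLNB\mapsto\SLNB-t$, which indeed sums to the observed $r$, and both auxiliary statistics vanish once no open strands remain. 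In effect the crossings with already-existing covering arcs and the nestings of negative arcs are pre-paid at earlier steps; without these bookkeeping statistics (or an equivalent reallocation argument, which you do not supply) your arc-by-arc matching would break down. Your intuition for the factor $2$ is right, but to make the proposal a proof you need the strengthened, vector-level induction of Theorem \ref{twr:momentogolne}.
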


\begin{Cor}\label{cor13} Assume that $x_i \in H_\R$ for $i\in[n]$.
\begin{enumerate}[\rm(1)]
\item For $\alpha=\lambda=0$ and $\overline{x_i}=x_i$, we obtain the $q$-deformed formula for moments of random variable on $q$-Fock space (see \cite{Ans01} or \cite[Proposition 6]{Ans04})
$$
\state_{0,q}\big( \mathcal{B}_{0,q}(x_n)\cdots \mathcal{B}_{0,q}(x_{ 1})\big)
=\sum_{\pi \in \P_{\geq 2}(n)} q^{\rc(\pi)}\prod_{B \in\pi }\Big\langle x_{\max(B)}, \prod_{i\in B, i\neq \min(B),\max(B)} T_{x_{i}}x_{\min(B)}\Big\rangle. 
$$
\item For $T=\mathbf{0}$ and $\lambda=0$ we get the formula for Gaussian operator of type B \cite[Corollary 3.9]{BEH15} 
\begin{align*}
\state\big( \mathcal{B}_{\alpha,q}(x_n)\cdots \mathcal{B}_{\alpha,q}(x_{ 1})\big)=\sum_{\pi_f \in \PB_{ 2}(n)} \alpha^{\NB(\pi_f)}q^{\rc(\pi)+2 \InNB(\pi_f)} \prod_{\substack{\{i,j\} \in \pi_f \\ f(\{i,j\})=1} }\langle x_i, x_j\rangle \prod_{\substack{\{i,j\} \in \pi_f\\ f(\{i,j\})=-1}} \langle x_i, \overline{x_j}\rangle. 
\end{align*}
\item For $q=\lambda=0$ and $\overline{x_i}=x_i$, we get 
\begin{align*}
\state_{\alpha,0}\big( {\mathcal{B}_{\alpha,0}}(x_n)\cdots {\mathcal{B}_{\alpha,0}}(x_{1})\big) 
&=\sum_{\pi \in \NC_{\geq 2}(n)} (1+\alpha)^{\#OutArc(\pi)}\Big\langle x_{\max(B)}, \prod_{i\in B, i\neq \min(B),\max(B)} T_{x_{i}}x_{\min(B)}\Big\rangle.
\end{align*}
where \begin{itemize}
\item $\#OutArc(\pi)$ is the number of outer arcs in $\pi$ in a sense that these arcs are not nested by others;
\item $\NC_{\geq 2}(n)$ is the set of noncrossing partitions of $[n]$ without singletons. 
\end{itemize}
\end{enumerate}
\end{Cor}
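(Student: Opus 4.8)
The strategy is to obtain each of the three parts of Corollary~\ref{cor13} by specializing the master Wick formula \eqref{wick:glowna} of Theorem~\ref{thm2} and then simplifying the combinatorial data that survive the chosen limit. The general formula reads
\[
\state\big( \mathcal{B}^{\lambda_n}_{\alpha,q}(x_n)\cdots \mathcal{B}^{\lambda_1}_{\alpha,q}(x_1)\big)=\sum_{\pi_f\in \PB(n)} \alpha^{\NB(\pi_f)}q^{\rc(\pi)+2\InNB(\pi_f)}\Cum_{\pi_f},
\]
so in every part I would first fix the parameters, identify which partitions $\pi_f$ contribute a nonzero term, and then read off what the weight and the cumulant product $\Cum_{\pi_f}$ become on that surviving family.

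\textbf{Parts (1) and (2).}
For part~(1) I set $\alpha=0$ and $\lambda=0$. Since each block $B_c$ with $\#B_c=1$ contributes the singleton cumulant $\lambda_{\min(B_c)}=0$, any $\pi_f$ containing a singleton is killed, so only $\pi_f\in\PB_{\geq 2}(n)$ survive; the factor $\alpha^{\NB(\pi_f)}$ forces all surviving arcs to have color $+1$ (any negative arc would carry a factor of $\alpha$ and vanish), so the coloring $f$ becomes trivial and $\InNB(\pi_f)=0$. Under $\overline{x_i}=x_i$ the color operators $\c_i$ all act as the identity, so $\Cum(B_c)$ reduces to the plain inner product $\langle x_{\max(B)},\prod_{i\in B,\,i\neq\min,\max}T_{x_i}\,x_{\min(B)}\rangle$ and the whole expression collapses to the Anshelevich $q$-formula indexed by $\P_{\geq 2}(n)$ with weight $q^{\rc(\pi)}$. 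Part~(2) is the complementary specialization $T=\mathbf{0}$, $\lambda=0$: the cumulant of any block with $\#B_c\geq 3$ contains a factor $T_{x_i}=0$ and vanishes, leaving only blocks of size exactly $2$, i.e.\ $\pi_f\in\PB_2(n)$; here the full color dependence is retained, and splitting the product over $f(\{i,j\})=\pm 1$ gives the two inner-product factors $\langle x_i,x_j\rangle$ and $\langle x_i,\overline{x_j}\rangle$, reproducing \cite[Corollary 3.9]{BEH15}. Both of these are essentially bookkeeping once the vanishing mechanisms are in place.

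\textbf{Part (3) — the main obstacle.}
The substantive part is $q=0$, $\lambda=0$, $\overline{x_i}=x_i$, where the exponent $q^{\rc(\pi)+2\InNB(\pi_f)}$ forces $\rc(\pi)=0$ and $\InNB(\pi_f)=0$. The condition $\rc(\pi)=0$ restricts the sum to noncrossing partitions, and together with $\lambda=0$ (which removes singletons as before) this leaves $\pi\in\NC_{\geq 2}(n)$. The genuinely delicate step is to show that, after summing over all admissible colorings $f$ of a fixed noncrossing $\pi$, the accumulated weight $\sum_f \alpha^{\NB(\pi_f)}$ subject to $\InNB(\pi_f)=0$ collapses to $(1+\alpha)^{\#\mathrm{OutArc}(\pi)}$. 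The key combinatorial fact to establish is that an arc may be colored $-1$ \emph{without} creating a restricted negative nesting precisely when it is an outer arc (not nested by any other arc): if an arc $W$ is nested by some arc $V$, then coloring $W$ by $-1$ contributes to $\InNB$ and is therefore forbidden, so every nested arc is forced to color $+1$, while each of the $\#\mathrm{OutArc}(\pi)$ outer arcs is free to take either color, contributing an independent factor $(1+\alpha)$. I would prove this by analyzing the nesting poset of a noncrossing partition and checking that the only arcs immune to being nested are exactly the outer ones; the reduction $\overline{x_i}=x_i$ makes the color operators trivial so that $\Cum_{\pi_f}$ is independent of $f$ and factors out of the color sum. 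Assembling these observations yields the stated formula with the product of block cumulants and the weight $(1+\alpha)^{\#\mathrm{OutArc}(\pi)}$.
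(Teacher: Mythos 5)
Your proposal is correct and takes essentially the same route as the paper's own proof: each part is read off by specializing the Wick formula of Theorem~\ref{thm2} (singletons killed by $\lambda=0$, negative arcs by $\alpha^{\NB(\pi_f)}$ when $\alpha=0$, blocks of size $\geq 3$ by $T=\mathbf{0}$, and for $q=0$ only noncrossing $\pi$ with all nested arcs colored $+1$ survive), after which the free $\pm 1$ choices on the outer arcs are summed to $\sum_{i=0}^{\#OutArc(\pi)}\binom{\#OutArc(\pi)}{i}\alpha^{i}=(1+\alpha)^{\#OutArc(\pi)}$, exactly as in the paper. The one point you gloss over --- asserting in part (3) that $\overline{x_i}=x_i$ alone makes $\Cum_{\pi_f}$ independent of $f$, which strictly also needs $\overline{T_{x_{i_j}}\cdots x_{i_1}}=T_{x_{i_j}}\cdots x_{i_1}$ when a later arc of a block is negative --- is glossed over in precisely the same way in the paper's proof, so it is not a deviation from it.
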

\begin{proof}(1) If we put $\alpha=\lambda=0$ in \eqref{wick:glowna}, then all arcs have color $1$ and 
$$
\Cum(B_c) = 
\Big\langle x_{\max(B)}, \prod_{i\in B, i\neq \min(B),\max(B)} T_{x_{i}}x_{\min(B)}\Big\rangle  \text{ if } c=(1,\dots,1).$$
\\(2) Under the assumption that $T=\mathbf{0}$ and $\lambda=0$, we see
$$\Cum({B_c})=\langle x_{\max(B_c)}, \c_{{m-1}}T_{x_{i_{m-1}}} \dots \c_{3}T_{x_{i_3}}\c_{2}T_{x_{i_2}} \c_{1}x_{\min(B_c)} \rangle \neq 0$$ $\iff$ $\#B_c=2$, which means that $\pi_f\in \PB_{ 2}(n).$
\\(3) When $q=\lambda=0$, then the nonzero terms in part (2) of Theorem \ref{thm2} occur only when $\pi$ is noncrossing and inner arcs are colored by 1. 
Thus, for $\pi \in \NC_{\geq 2}(n)$ we count how many times we can assign color $-1$ to outer arcs and so we get 
$$\sum_{i=0}^{\#OutArc(\pi)}\alpha^i{\#OutArc(\pi) \choose i}=\left(1+\alpha\right)^{\#OutArc(\pi)}.$$ 
\end{proof}
\begin{example}The deformed cumulants and partitions of type B for $\state\big(\mathcal{B}_{\alpha,q}(x_4)\mathcal{B}_{\alpha,q}(x_3)\mathcal{B}_{\alpha,q}(x_2)\mathcal{B}_{\alpha,q}(x_1)\big)$ can be graphically represented in Figure \ref{LiczenieMomentow}.
\begin{figure}[h]
\begin{center}
\includegraphics[width=0.7\textwidth]{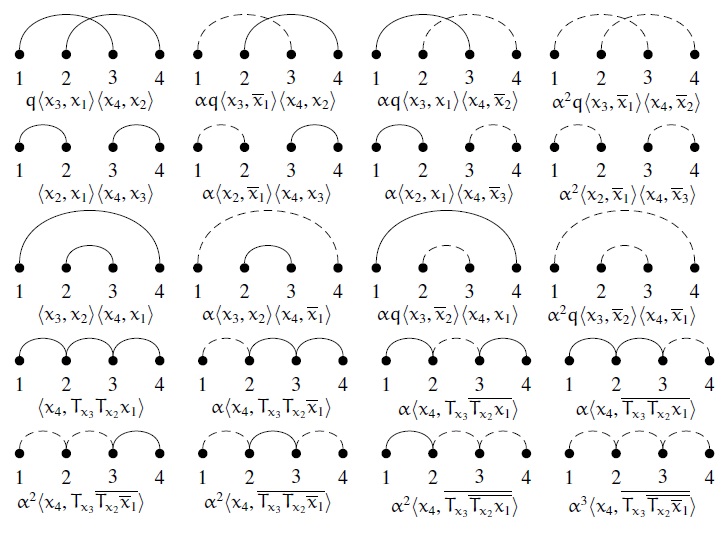}

\caption{Deformed cumulants, statistics and partitions of type B for  $\state\big(\mathcal{B}_{\alpha,q}(x_4)\mathcal{B}_{\alpha,q}(x_3)\mathcal{B}_{\alpha,q}(x_2)\mathcal{B}_{\alpha,q}(x_1)\big)$.}
 \label{LiczenieMomentow}
\end{center}
\end{figure} 
\end{example}

\subsection{The Poisson distribution of type B}
\label{Subsec:Poisson}
For a probability measure $\mu$ with finite moments of all orders, let us orthogonalize the sequence $(1,y,y^2,y^3,\dots)$ in the Hilbert space $L^2(\R,\mu)$, following the Gram-Schmidt method. This procedure yields orthogonal polynomials $(P_0(y), P_1(y), P_2(y), \dots)$ with $\text{deg}\, P_n(y) =n$. Multiplying by constants, we take $P_n(y)$ to be monic, i.e. the coefficient of $y^n$ is 1. It is known that they satisfy a recurrence relation
\[
y P_n(y) = P_{n+1}(y) +\beta_n P_n(y) + \gamma_{n-1} P_{n-1}(y),\qquad n =0,1,2,\dots
\]
with the convention that $P_{-1}(y)=0$. The coefficients $\beta_n$ and $\gamma_n$ are called \emph{Jacobi parameters} and they satisfy $\beta_n \in \R$ and $\gamma_n \geq 0$. 
The following representation of type-B Poisson distribution is in the spirit of \cite{Ans01} and rather different from that of \cite{SaiKraw,SaiPoisson}.
\begin{Defn}
$(\alpha,q)$-Poisson of type B polynomials are defined by the recursion relations
\begin{align}
\label{Charlier}
y P_{ n}^{(\alpha,q)}( y) &= P_{ n+1}^{(\alpha,q)}(y) + [n]_q (1+ \alpha q^{n-1}) P_{ n}^{(\alpha,q)}( y) + [n]_q (1 + \alpha q^{n-1}) P_{n-1}^{(\alpha,q)}( y), \qquad n\geq 1
\end{align}
with initial conditions $P_{-1}^{(\alpha,q)}(y) = 0$, $P_{0}^{(\alpha,q)}( y) = 1$ and $P_{1}^{(\alpha,q)}( y) = y$. There exists a probability measure $\mu_{\alpha,q}$ which is associated to the orthogonal polynomials $P_{ n}^{(\alpha,q)}$. 
\end{Defn}
\begin{remark} Professor M. Ismail informed us that the measure of orthogonality of the above polynomial sequence is not known. The $(\alpha,q)$-Poisson of type B polynomials are $q$-analogues of the polynomials studied in \cite{IsmaKo12} (equation (5.11) with $\nu =1$). In special cases, we can identify this measure:
\begin{enumerate}[\rm(1)]
\item the measure $\mu_{\alpha,1}$ is the classical Poisson law; 
\item the measure $\mu_{0,0}$ is the Marchenko-Pastur distribution; 
\item the measure $\mu_{0,q}$ is the $q$-Poisson law and the orthogonal polynomials $P_n^{(0,q)}(y)$ are called \emph{$q$-Poisson-Charlier polynomials} (see \cite{Ans01}); 
\item the measure $\mu_{\alpha,-1}$ is a non-symmetric Bernoulli distribution; 
\item the measure $\mu_{\alpha,0}$, $\alpha\neq 0$ is a two-state free Meixner distribution because its Jacobi parameters are independent of $n$ for $n\geq 2$ (see \cite{AnsMlot12,DGIX09}). The corresponding measure belongs to the Bernstein-Szeg$\ddot{o}$ class i.e. has at most 3 atoms and absolutely continuous part of $\mu_{\alpha,0}$ is  
$$\frac{\sqrt{4-(x-1)^2}}{p(x)}dx
,$$
where $p(x)$ is a cubic polynomial.
\end{enumerate}
\end{remark}
\begin{Prop} Suppose that $\alpha,q\in(-1,1)$ and $x \in H, \|x\|=1, \overline{x}=\pm x$ and $T=Id$. Then the probability distribution of $\mathcal{B}_{\alpha,q} $ with respect to the vacuum state is given by $\mu_{\alpha,q}$ if $\overline{x}= x$ and $\mu_{-\alpha,q}$ if $\overline{x}= -x$. 
\label{WielomianyTypeB}
\end{Prop}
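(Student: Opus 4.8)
The plan is to exploit that, because every tensor leg is the same unit vector $x$ with $\overline{x} = \pm x$, the operator $\mathcal{B}_{\alpha,q}(x) = \B(x) + \B^\ast(x) + p(\id)$ (here $\lambda = 0$ and $T = \id$) acts tridiagonally on the single subspace spanned by the tensor powers $v_n := x^{\otimes n}$, $n\geq 0$, with $v_0 = \Omega$. I would first record that $\mathcal{B}_{\alpha,q}(x)$ is a bounded self-adjoint operator, by the Remark following the Definition of $\B,\B^\ast$ together with Proposition~\ref{Ansh+}, so its vacuum distribution is a well-defined compactly supported probability measure and it suffices to identify the associated three-term recurrence. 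Throughout I write $w_n := [n]_q\bigl(1 \pm \alpha q^{n-1}\bigr)$, with the sign equal to $+$ when $\overline{x}=x$ and $-$ when $\overline{x}=-x$.

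Next I would compute the three actions on $v_n$. The creation part is immediate, $\B^\ast(x) v_n = v_{n+1}$. For the annihilation part, substituting $x_1=\cdots=x_n=x$ into \eqref{rq}--\eqref{lq} and using $\langle x,x\rangle = 1$ and $\langle x,\overline{x}\rangle = \pm 1$ collapses both sums to $q$-integers:
\[
\r_q(x) v_n = [n]_q\, v_{n-1}, \qquad \ell^N_q(x) v_n = \pm\, q^{n-1}[n]_q\, v_{n-1},
\]
so that $\B(x) v_n = w_n v_{n-1}$. The same substitution in the action formulas \eqref{rqT}--\eqref{lqT} for $\r_q^{\id}$ and $\ell_q^{N,\id}$ shows the gauge operator acts diagonally, $p(\id) v_n = w_n v_n$, with exactly the same coefficient. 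Combining the three gives
\[
\mathcal{B}_{\alpha,q}(x)\, v_n = v_{n+1} + w_n v_n + w_n v_{n-1}.
\]

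I would then pin down the norms. Since the $(\alpha,q)$-inner product vanishes across distinct tensor degrees, the $v_n$ are automatically orthogonal. Using that $\B(x)$ is the $\langle\cdot,\cdot\rangle_{\alpha,q}$-adjoint of $\B^\ast(x)$, together with $v_n = \B^\ast(x) v_{n-1}$ and the annihilation formula just obtained,
\[
\|v_n\|_{\alpha,q}^2 = \langle \B^\ast(x) v_{n-1}, v_n\rangle_{\alpha,q} = \langle v_{n-1}, \B(x) v_n\rangle_{\alpha,q} = w_n\,\|v_{n-1}\|_{\alpha,q}^2,
\]
whence $\|v_n\|_{\alpha,q}^2 = \prod_{k=1}^n w_k$; note $w_k > 0$ for $\alpha,q\in(-1,1)$, so these are genuine positive norms.

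Finally I would translate the tridiagonal data into orthogonal polynomials. Defining monic polynomials $Q_n$ by $Q_n(\mathcal{B}_{\alpha,q}(x))\Omega = v_n$ (well-defined since the $v_0,\dots,v_n$ and $\Omega,\dots,\mathcal{B}_{\alpha,q}(x)^n\Omega$ span the same space), the recurrence for the $v_n$ forces $yQ_n = Q_{n+1} + w_n Q_n + w_n Q_{n-1}$, which is exactly \eqref{Charlier} with $\alpha$ replaced by $+\alpha$ (case $\overline{x}=x$) or $-\alpha$ (case $\overline{x}=-x$). Because the $v_n = Q_n(\mathcal{B}_{\alpha,q}(x))\Omega$ are orthogonal in the Fock space, the $Q_n$ are orthogonal in $L^2$ of the vacuum distribution of $\mathcal{B}_{\alpha,q}(x)$; and since the Jacobi parameters $w_n$ are bounded (as $[n]_q \to (1-q)^{-1}$), the moment problem is determinate, so that distribution is precisely the measure $\mu_{\alpha,q}$ (resp. $\mu_{-\alpha,q}$) determined by those orthogonal polynomials. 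I do not expect a serious obstacle; the one delicate point is the sign bookkeeping that yields $\mu_{-\alpha,q}$ rather than $\mu_{\alpha,q}$ when $\overline{x}=-x$, which hinges on the single evaluation $\langle x,\overline{x}\rangle = -1$ propagating consistently through both the annihilation and the gauge computations.
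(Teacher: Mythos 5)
Your proposal is correct and follows essentially the same route as the paper: both identify the tridiagonal action of $\mathcal{B}_{\alpha,q}(x)$ on the tensor powers $x^{\otimes n}$, match it to the three-term recurrence \eqref{Charlier} with $\pm\alpha$ according to $\overline{x}=\pm x$, and use orthogonality of the $x^{\otimes n}$ together with boundedness (Proposition \ref{Ansh+}) to conclude the vacuum distribution is $\mu_{\pm\alpha,q}$. The only cosmetic difference is direction: the paper verifies $P_n^{(\alpha,q)}(\mathcal{B}_{\alpha,q}(x))\Omega=x^{\otimes n}$ by induction and then invokes the isometry onto $L^2(\mu_{\alpha,q})$, whereas you define the monic polynomials from the vectors and read off the recurrence, which amounts to the same argument.
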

\begin{proof}First assume that $\overline{x}= x$. Note that for $n=1$ $P_{1}^{(\alpha,q)}(\mathcal{B}_{\alpha,q}(x))\Omega =\mathcal{B}_{\alpha,q}(x)\Omega=x$ and by induction 
\begin{align*}
&P_{ n+1}^{(\alpha,q)}( \mathcal{B}_{\alpha,q}(x))\Omega =\mathcal{B}_{\alpha,q}(x) P_{ n}^{(\alpha,q)}( \mathcal{B}_{\alpha,q}(x))\Omega\\&-[n]_q (1+ \alpha q^{n-1}) P_{ n}^{(\alpha,q)}( \mathcal{B}_{\alpha,q}(x))\Omega - [n]_q (1 + \alpha q^{n-1}) P_{n-1}^{(\alpha,q)}( \mathcal{B}_{\alpha,q}(x))\Omega\\&=\mathcal{B}_{\alpha,q}(x) x^{\otimes n}-[n]_q (1 + \alpha q^{n-1})x^{\otimes n}- [n]_q(1+ \alpha q^{n-1})x^{\otimes (n-1)}=x^{\otimes n+1}.
\end{align*}
Hence, from above it follows that
\begin{equation}
\|x^{\otimes n}\|_{\alpha,q}= \|P_n^{(\alpha,q)}\|_{L^2},\qquad n\in\N\cup\{0\}. 
\end{equation}
Therefore the map $\Phi\colon (\text{span}\{x^{\otimes n}\mid n \geq 0\}, \|\cdot\|_{\alpha,q}) \to L^2(\R,\mu_{\alpha,q})$ defined by $\Phi(x^{\otimes n})= P_n^{(\alpha,q)}(y)$ is an isometry.  Since $\Phi$ is an isometry, we get $\langle \Omega,\mathcal{B}_{\alpha,q}^n(x)\Omega\rangle_{\alpha,q} = m_n(\mu_{\alpha,q})$. 
By Proposition \ref{Ansh+} we conclude that $\mathcal{B}_{\alpha,q}(x)$ is bounded, so its vacuum distribution is compactly supported. Hence, the moments uniquely determine the measure and we conclude that $\mathcal{B}_{\alpha,q}(x)$ has the distribution $\mu_{\alpha,q}$. We proceed analogously if $\overline{x}=-x$. 
\end{proof}

\subsection{Proof of the main theorem}We begin with some special notations.

\noindent
\textbf{Extended partition.} In order to prove the main theorem we need the set $\PB_E(n)$ of so-called extended partitions, which contains the set $\PB(n)$. We use these partitions in the proof of Theorem \ref{twr:momentogolne}, only. Here each block of size at least two can be additionally marked by $\prime$. More precisely, for the fixed $\pi_f \in \PB(n)$, and $B_c\in \pi_f$, where $\#B_c\geq 2$, we consider additional numbers $\bar{1},\bar{2},\dots,\bar{n}$ and define  $B_c'=\{i_1,\dots,i_m\}'_{c}:=\{\bar{i}_1,\dots,\bar{i}_m\}_{c}.$
\begin{Defn}The set of partitions $\PB_E(n)$ is defined from the set $\PB(n)$, as follows:   
$$ \PB_E(n) = \big\{S'\cup (\pi_f \setminus S)\mid S\subset \pi_f \setminus \Sing(\pi), \textrm{ where }\pi_f\in \PB(n)\big\},$$ 
where $S':=\{A'\mid A\in S\}$ and $\emptyset'=\emptyset.$ 
\end{Defn}
 \begin{remark} We can think about operation $\prime$ in this way, that we pick some  blocks of size at least two and marked them by $\prime.$ 
\end{remark}
\begin{example}For example,
\begin{align*} \PB_E(3)=&\Big\{\big\{\{1\},\{2\},\{3\}\big\},\big\{\{1,2\}_{(\pm1)},\{3\}\big\},\big\{\{1,3\}_{(\pm1)},\{2\}\big\},\big\{\{2,3\}_{(\pm1)},\{1\}\big\}, \big\{\{1,2,3\}_{(\pm1,\pm1)}\big\},\\&\big\{\{1,2\}\primes_{(\pm1)},\{3\}\big\},\big\{\{1,3\}\primes_{(\pm1)},\{2\}\big\},\big\{\{2,3\}\primes_{(\pm1)},\{1\}\big\},\big\{\{1,2,3\}\primes_{(\pm1,\pm1)}\big\}\Big\}.\end{align*}
\end{example}
\noindent 
For $\pi_f \in \PB_E(n) $ we denote by $Block\primes(\pi_f)$ the blocks of $\pi_f$ which are marked by $\prime$ and $Block(\pi_f)=\pi_f\setminus \big(Block\primes(\pi_f)\cup \Sing(\pi)\big).$ Thus we can decompose an extended partition of type B as a disjoint subset $$\pi_f=Block\primes(\pi_f)\cup Block(\pi_f)\cup \Sing(\pi).$$
\noindent \textbf{\emph{Cover and left of max}}. We also need some other statistics: $\InS(\pi)$ and $\SLNB(\pi_f)$ for $\pi_f \in \PB_E(n)$ which we will use in Theorem \ref{twr:momentogolne}, only. Namely, we define
\begin{align*}
&\InS(\pi)=\#\big\{ (V,W)\in \big[Block\primes(\pi_f)\cup \Sing(\pi)\big]\times \Semi(\pi_f) \mid 
 i<\max(V)<j \textrm{ for } i,j\in W\big\},
\\&
\SLNB(\pi_f)=\#\{ (V,W)\in \big[Block\primes(\pi_f)\cup \Sing(\pi)\big]\times \Semi(\pi_f,-1) \mid \max(V) < j \text{~for all~} j\in W \}.
\end{align*}
\begin{remark} Note that  $\InS(\pi)$ represents the number of covered singletons and $\SLNB(\pi_f)$  the number of singletons to the left of negative arcs, whenever we have $Block\primes(\pi_f)=\emptyset.$
\end{remark}
\begin{example} For the partition in Figure \ref{RestrictedCrossings1} we see that 
\begin{itemize}
\item if $\pi = \{\{1,4,6,7\}_{(-1,1,-1)},\{2\},\{3,5,10\}\primes_{(1,-1)},\{8,12\}_{(-1)},\{9,11\}_{(1)}\}$, then $\rc{(\pi)}=5$, $\InNB(\pi_f)=1$, $\NB(\pi_f)=4$, $\InS(\pi)=3$ and $\SLNB(\pi_f)=3$;
\item if $\pi = \{\{1,4,6,7\}_{(-1,1,-1)},\{2\},\{3,5,10\}_{(1,-1)},\{8,12\}_{(-1)},\{9,11\}_{(1)}\}$, then $\rc{(\pi)}=5$, $\InNB(\pi_f)=1$, $\NB(\pi_f)=4$,  $\InS(\pi)=1$ and $\SLNB(\pi_f)=3$; 
\item if $\pi = \{\{1,4,6,7\}\primes_{(-1,1,-1)},\{2\},\{3,5,10\}_{(1,-1)},\{8,12\}_{(-1)},\{9,11\}_{(1)}\}$, then $\rc{(\pi)}=5$, $\InNB(\pi_f)=1$, $\NB(\pi_f)=4$, $\InS(\pi)=2$ and $\SLNB(\pi_f)=4$. 
\end{itemize}

\begin{figure}[ht]

\begin{center}

\includegraphics[width=0.7\textwidth]{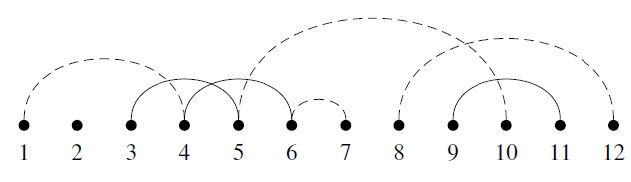}

\caption{A partition of $12$ elements with  five blocks 
}
 \label{RestrictedCrossings1}
\end{center}
\end{figure} 
\end{example}

\noindent
In order to simplify  notation, we define the following operators, which map $H$ into $H$ and which are indexed by the block $B_c=\{i_1,\dots,i_m\}_{(c_{1},\dots,c_{{m-1}})}\in \pi$, i.e.
\begin{align*}&\OptTTyl_{B_c}=
\begin{cases}
I & \text{ if } B_c\in \Sing(\pi)\text{ } (c=1)
\\
T_{x_{i_m}}\c_{{m-1}} \dots \c_{3}T_{x_{i_3}}\c_{2}T_{x_{i_2}} \c_{1}& \text{ if } B_c\in Block\primes(\pi_f), 
\end{cases}
\\
&\OptT_{B_c}=
\begin{cases}\c_{1} & \text{ if } \#B_c=2 \text{ and } B_c\in Block(\pi_f),
\\
\c_{{m-1}}T_{x_{i_{m-1}}} \dots \c_{3}T_{x_{i_3}}\c_{2}T_{x_{i_2}} \c_{1}& \text{ if } \#B_c>2 \text{ and }B_c\in Block(\pi_f), 
\end{cases}
\end{align*}
where $\mb{x}=(x_1,\dots,x_n) \in H_\R^n$, $\pi_f\in\PB_{E}(n)$ and $f_i$ is a color operator introduced in the Definition \ref{Defi:Cumulant} of cumulants. With the above notations we also introduce 
\begin{align*}
\begin{split}
&\Cumm=\prod_{\substack{B_c \in \pi_f\\ B_c\in Block(\pi_f)}} \langle x_{\max(B_c)}, \OptT_{B_c}x_{\min(B_c)}\rangle, \\ 
&\CummTensor=\bigotimes_{\substack{ B_c \in \pi_f\\ B_c\in Block\primes(\pi_f)\cup \Sing(\pi) }}\Bigg\{ \OptTTyl_{B_c} x_{\min(B_c)}\Bigg\}_{\max(B_c)}.
\end{split}
\end{align*}
Notice that in the above formula we use the following bracket notation $\{\star\}_{\max(B_c)}$, which should be understood that the position of $\star$ (in the tensor product) is ordered with respect to the $\max(B_c)$. For example, if $Block\primes(\pi_f)\cup \Sing(\pi)=\{\{4\},\{6\},\{2,5,7\}\primes_{(1, -1)},\{1,3\}\primes_{(-1)}\} $, then 
$$\CummTensor=T_{x_3}\overline{x}_1\otimes x_4\otimes x_6 \otimes T_{x_7}\overline{T_{x_5}x_2}.$$
We also use the following conventions for $\epsilon\in\{1,\ast,\prime\}$
\[
\B^{\epsilon}(x) = 
\begin{cases}
\B^{*}(x) & \text{ if } \epsilon=\ast, \\
\r_q(x)+ \alpha \ell^N_{q}({x}) & \text{ if } \epsilon=1,
\\
\r_q^{T_x}(x)+ \alpha \ell_{q}^{N,T_x}({x})& \text{ if } \epsilon=\prime.
\end{cases}
\]
\noindent
Now we prove the following theorem, which shows the relationship between partitions of type B (corresponding statistic) and a joint action of operators on a vacuum vector introduced in Section 2. This theorem is of
independent interest,  and will
also help in applications of Theorem \ref{thm2}.  
\begin{theorem}\label{twr:momentogolne}
For any $\mb{x}=(x_1,\dots,x_n) \in H_\R^n$ and any $\epsilon=(\epsilon(1), \dots, \epsilon(n))\in\{1,\ast,\prime\}^n$, we have
\begin{equation}\label{formula101}
\B^{\epsilon(n)}(x_{n})\cdots \B^{\epsilon(1)}(x_1)\Omega = \sum_{\pi_f\in\PB_{E;\epsilon}(n)} \alpha^{\NB(\pi_f)}q^{\rc(\pi) +\InS(\pi)+ 2 \InNB(\pi_f)+2\SLNB(\pi_f) } \Cumm\CummTensor.
\end{equation}
\end{theorem}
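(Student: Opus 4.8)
The plan is to prove \eqref{formula101} by induction on $n$, applying the factors $\B^{\epsilon(1)}(x_1),\B^{\epsilon(2)}(x_2),\dots$ one at a time and reading off how each elementary operator transforms the right-hand side. Writing $\epsilon'=(\epsilon(1),\dots,\epsilon(n-1))$, the induction hypothesis expresses $\B^{\epsilon(n-1)}(x_{n-1})\cdots\B^{\epsilon(1)}(x_1)\Omega$ as a sum over $\sigma_f\in\PB_{E;\epsilon'}(n-1)$ of the corresponding weight times the scalar and tensor factors $\Cumm$, $\CummTensor$ (formed from $\sigma_f$). Throughout I maintain the structural invariant that the tensor legs of $\CummTensor$ are exactly the \emph{open} blocks of $\sigma_f$ --- the singletons together with the primed blocks of $Block\primes(\sigma_f)$ --- listed from left to right in increasing order of their maxima, the leg attached to $B_c$ carrying the vector $\OptTTyl_{B_c}x_{\min(B_c)}$. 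The base case $n=0$ is the empty partition and $\Omega$.

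For the inductive step I apply $\B^{\epsilon(n)}(x_n)$ and split according to $\epsilon(n)$, using the explicit right actions: $\B^\ast(x_n)$ from the Remark following the Definition, $\B(x_n)=\r_q(x_n)+\alpha\ell^N_q(x_n)$ from \eqref{rq}--\eqref{lq}, and $p(T_{x_n})=\r_q^{T_{x_n}}+\alpha\ell_q^{N,T_{x_n}}$ from \eqref{rqT}--\eqref{lqT}. A creation appends $x_n$ on the right, adjoining a new singleton $\{n\}$; since $n$ is the new largest element this leg sits at the far right, respecting the invariant, and none of the four statistics changes. In the other two cases the $\r$-part acts on the $k$-th open leg through $\langle x_n,\cdot\rangle$ and the $\ell^N$-part through $\langle x_n,\overline{\,\cdot\,}\rangle$; I record these as attaching a new arc $\{\max(B_c),n\}$ of color $+1$ or $-1$, the conjugation reproducing the color operator $\c$ and the prefactor $\alpha$ recording that $\NB$ rises by one. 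An annihilation \emph{closes} the selected block (it leaves $\CummTensor$ and enters $\Cumm$ through the inner product, becoming an unprimed block of $Block(\pi_f)$), whereas the gauge part appends $T_{x_n}$ applied to the $k$-th leg on the right and so \emph{extends} it, keeping it primed and moving its leg to the far right. In every case the updates of $\Cumm$, $\CummTensor$, the $T$-decorations and the conjugations are dictated by the definitions of $\OptT_{B_c}$ and $\OptTTyl_{B_c}$, e.g.\ $\OptT_{\tilde B_{\tilde c}}=\c_m\OptTTyl_{B_c}$ for the newly closed block $\tilde B$. Summing over $k$ and over the two colors identifies the resulting terms precisely with $\PB_{E;\epsilon}(n)$, where a block's minimum is a creation, its interior points are gauges, and its maximum is an annihilation (unprimed block) or a gauge (primed block).

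The heart of the argument, and the step I expect to be most delicate, is the weight bookkeeping: I must check that the exponent supplied by the operator acting at the $k$-th of the $M$ open legs --- namely $q^{M-k}$ for an $\r$-part and $q^{M+k-2}$ for an $\ell^N$-part --- equals the increment of $\rc(\pi)+\InS(\pi)+2\InNB(\pi_f)+2\SLNB(\pi_f)$ caused by adjoining $n$. Because $n$ is the globally largest element, the new arc $W_0=\{\max(B_c),n\}$ reaches to the far right, and I exploit this through three cancellation identities. First, the arcs straddling $\max(B_c)$ are exactly those that cross $W_0$ (so they are gained by $\rc$) and at the same time are exactly the arcs that covered the max of the open block $B_c$ (so closing or extending $B_c$ loses them from $\InS$); these contributions cancel. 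Secondly, $W_0$ covers the maxima of precisely the $M-k$ open blocks lying to its right, so $\InS$ gains $M-k$ --- this already accounts for the exponent $q^{M-k}$ of the $\r$-parts. Thirdly, for a negative $W_0$ the negative arcs lying to the right of $\max(B_c)$ are simultaneously the arcs newly nested by $W_0$ (gained by $\InNB$) and the arcs whose pairing with $B_c$ is lost from $\SLNB$, so they cancel, while a negative $W_0$ creates a fresh left-of-negative-arc pair with each of the $k-1$ open blocks to its left, so $\SLNB$ gains $k-1$; this supplies the extra $q^{2(k-1)}$ and yields the total exponent $M+k-2$ of the $\ell^N$-parts.

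It remains to confirm that the $\InNB$/$\SLNB$ ledger is also balanced for a \emph{positive} $W_0$: there the negative arcs to the right of $\max(B_c)$ simply migrate from $\SLNB$ (as left-of pairs with $B_c$) into $\InNB$ (as arcs nested by $W_0$) with no net change, correctly leaving the $\r$-parts with no factor of $2$, and the creation step has already been seen to change nothing. The principal obstacle throughout is exactly these cancellations. They rest on two facts: the invariant that open legs are ordered by their maxima, so that ``position $k$'' means precisely $k-1$ open blocks to the left and $M-k$ to the right; and the fact that adjoining the globally largest element forces $W_0$ to span to the right, which is what allows ``crosses'', ``covers'', ``nests'' and ``lies to the left of'' to be interchanged in exactly the way the four statistics require.
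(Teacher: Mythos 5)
Your proposal is correct and follows essentially the same route as the paper's own proof: induction on $n$, applying each $\B^{\epsilon}$ to the open legs of $\CummTensor$ (ordered by block maxima), interpreting the creation/annihilation/gauge and $\r$-versus-$\ell^N$ parts as adding a singleton, closing, or extending a (primed) block with an arc of color $\pm1$, and matching the operator coefficients $q^{M-k}$ and $q^{M+k-2}$ against the increments of $\rc+\InS+2\InNB+2\SLNB$ via exactly the cancellations the paper records (in its notation $p$, $r$, $u$, $s$, $t$). The bookkeeping you give in the positive and negative cases coincides with the paper's Cases 1, 2a, 2b, 3a, 3b, so no gap remains.
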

\begin{remark} (1). If $Block\primes(\pi_f)\cup \Sing(\pi)=\emptyset$, then $\CummTensor=\Omega.$ 

\noindent (2). If $\#\{i\in[j] \mid \epsilon(i)=1\}>\#\{i\in[j] \mid \epsilon(i)=\ast\}$ 
 for some $j\in[n]$, then we have
$$\B^{\epsilon(n)}(x_{n})\cdots \B^{\epsilon(1)}(x_1)\Omega=0.$$
This case is also covered by \eqref{formula101} if we understand that the sum over the empty set is 0 since $\PB_{E;\epsilon}(n)=\emptyset$ in this case. 

\noindent (3).  The main new ingredient in the proof of Theorem~3 appears in step~3, 
besides some new concepts point~2. 
In step 2a) and 2b) a new block (cumulant) is created by adding arcs, 
while in the proof of \cite[Theorem 3.7]{BEH15}  just a pair is created.
Steps 3a) and 3b) treat the gauge operator and during this
procedure we generate extended blocks, which are necessary to produce
higher order cumulants.
During these steps calculating the change in the statistic a bit more
complicated than in the proof of \cite[Theorem 3.7]{BEH15} and in fact
somewhat unnatural, because of some fairly intricate dependencies
between the various type of blocks.
\end{remark}
\begin{proof} The proof is given by induction. When $n=1$, $\B(x_1)\Omega=p(T_{x_1})\Omega=0$ and $\B^\ast(x_1)\Omega=x_1$ and hence the formula is true. Suppose that the formula is true for $n=k$. Then for any $\epsilon\in\{1,\ast,\prime\}^{k}$, we get 
\begin{align*}
\B^{\epsilon(k)}(x_{k})\cdots \B^{\epsilon(1)}(x_1)\Omega = \sum_{\pi_f\in\PB_{E;\epsilon}(k)} \alpha^{\NB(\pi_f)}q^{\rc(\pi) +\InS(\pi)+ 2 \InNB(\pi_f)+2\SLNB(\pi_f) } \Cumm\CummTensor.
\end{align*}
We will show that the action of $\B^{\epsilon(k+1)}(x_{k+1})$ corresponds to the inductive graphic description of set partitions of type B.
\noindent
From now on, we fix a partition $\pi_f\in \PB_{E;\epsilon}(k)$ and one block $ \BC\in Block\primes(\pi_f)\cup \Sing(\pi)$ in the $\ith^{\rm th}$ position i.e. $\ith=\max( \BC)$. In this situation, the block $\BC$ contributes to the element $\Tens_{\BC} x_{\min(\BC)}$ (in a sense that $\CummTensor=\dots\otimes \{\Tens_{\BC} x_{\min(\BC)}\}_{\ith}\otimes \dots$). Suppose that $\pi_f$ has
\begin{itemize}
\item $p$ blocks $ B_c\in Block\primes(\pi_f)\cup \Sing(\pi)$, such that $ \max(B_c)<\ith$ -- we call them $Left(\BC)$,
\item $r$ blocks $B_c\in Block\primes(\pi_f)\cup \Sing(\pi)$, such that $ \max(B_c)> \ith $ -- we call them $Right(\BC)$. 
\end{itemize}
We understand that $p=0$ ($r=0$, respectively) when there are no blocks $B_c\in Block\primes(\pi_f)\cup \Sing(\pi)$ such that $\max(B_c)$ is to the left (right, respectively) of $\ith$. Here we assume more, that $\pi_f$ has 
\begin{itemize}
\item arcs $U_1,\dots, U_s$ with color $1$ to the right of $\ith^{\rm th}$ in the strict sense,
\item arcs $V_1,\dots, V_t$ with color $-1$ to the right of $\ith^{\rm th}$ in the strict sense,
\item arcs $W_1,\dots, W_u$ which cover $\ith^{\rm th}$ in a sense that $a_\star<\ith<b_\star$ for $a_\star,b_\star \in W_\star$.
\end{itemize}

\begin{figure}[ht]

\begin{center}

\includegraphics[width=0.7\textwidth]{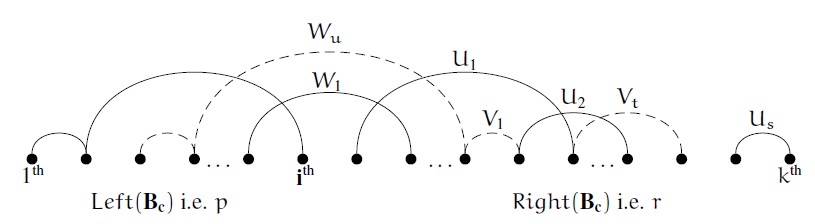}

\caption{ The main structure of partition $\pi_f\in \PB_{E;\epsilon}(k)$ in the induction step.  
 }
 \label{RestrictedCrossingsPoofUW}
\end{center}
\end{figure}
There may be arcs to the left of $\ith^{\rm th}$, but they do not matter -- see Figure \ref{RestrictedCrossingsPoofUW}. Note here that $p+r+1=\#\Sing(\pi)+\# Block'(\pi_f)$. In the proof we use the notation $\tilde{\textbf{B}}_{{(\textbf{c},\pm1 )}}$, which denotes the block created from $\BC$ by adding color $\pm 1$ to the last coordinate of $\textbf{c}$, i.e $(\textbf{c},\pm1 )$ and $\tilde{\textbf{B}}=\textbf{B} \cup \{k+1\}$. In the cases below, each subsequent operator $\B^{\epsilon}(x)$ contributes to 
\begin{itemize}
\item a new singleton in $\Sing(\tilde{\pi})$, $\text{ if } \epsilon=\ast$ -- Case 1;
\item a new block in $Block(\tilde{\pi}_{\tilde{f}})$, $\text{ if } \epsilon=1 $ -- Case 2;
\item a new block in $Block\primes(\tilde{\pi}_{\tilde{f}})$, $\text{ if } \epsilon=\prime$ -- Case 3.
\end{itemize} 
\noindent Case 1. If $\epsilon(k+1)=\ast$, then the operator $\B^\ast(x_{k+1})$ acts on the tensor product, putting $x_{k+1}$ on the right. This operation graphically corresponds to adding the singleton $\{k+1\}$ (with color 1) to $\pi_f\in\PB_{E;\epsilon}(k)$, to yield a new type-B partition $\tilde{\pi}_{\tilde{f}}\in\PB_{E;\epsilon}(k+1)$. This map $\pi_f\mapsto \tilde{\pi}_{\tilde{f}}$ does not change the numbers $\NB, \rc, \InNB, \SLNB$ or $\InS$, because a new singleton is the right most element of $\tilde{\pi}$. This is compatible with the fact that the action of $\B^\ast(x_{k+1})$ does not change the coefficient. Hence, the formula \eqref{formula101} holds when $n=k+1$ and $\epsilon(k+1)=\ast$. 
\\
\\
Now we move to Cases 2 and 3, where we assume $Block\primes(\pi_f)\cup \Sing(\pi)\neq \emptyset.$
\\
\\
Case 2. If $\epsilon(k+1)=1$, then we have two cases.
 
\noindent Case 2a. If $\r_q(x_{k+1})$ acts on the tensor product, then new $p+r+1$ terms appear by using \eqref{rq}. In the $\ith^{\rm th}$ term the inner product $\langle x_{k+1}, \Tens_{\BC} x_{\min(\BC)} \rangle$ appears with coefficient $q^r$. Graphically this corresponds to getting a set partition $\tilde{\pi}_{\tilde{f}} \in \PB_{E;\epsilon}(k+1)$ by adding $k+1$ to $\pi_f$ and creating the block $\tilde{\textbf{B}}_{\textbf{(C,1)}}\in Block(\tilde{\pi}_{\tilde{f}})$, i.e. now the last arc has color $1$ -- see Figure \ref{RestrictedCrossingsPoofPoint2}. This new arc $\{\ith,k+1\}$ 
\begin{itemize}
\item crosses the arcs $W_1, \dots, W_u$ and so increases the number of crossings by $u$;
\item decreases by $u$ the number of blocks $B_c\in Block\primes(\tilde{\pi}_{\tilde{f}})\cup \Sing(\tilde{\pi})$ such that $\max(B_c)$ is between the arcs $W_1,\dots, W_u$ (originally $\ith$ was between the arcs $W_1,\dots, W_u$);
\item increases by $r$ the number of blocks $B_c\in Block\primes(\tilde{\pi}_{\tilde{f}})\cup \Sing(\tilde{\pi})$ such that the $ \max(B_c)$ is covered by the new arc;
\item covers the negative arcs $V_1,\dots, V_t$;
\item causes that in the new situation $\BC$ is not a block in $\tilde{\pi}_{\tilde{f}}$, so the number $\SLNB(\pi_f)$ decreases by $t$. 
\end{itemize}

\begin{figure}[ht]

\begin{center}

\includegraphics[width=0.7\textwidth]{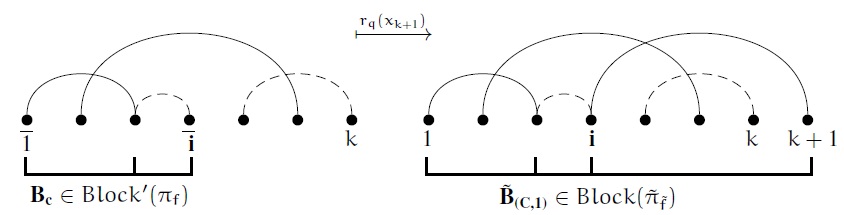}

\caption{  The visualization  of the action $\r_q(x_{k+1})$ on $\Tens_{\BC} x_{\min(\BC)}.$ 
}
 \label{RestrictedCrossingsPoofPoint2}
\end{center}
\end{figure}
Altogether we have: $\rc(\tilde{\pi})=\rc(\pi)+u$, $\InS(\tilde{\pi})=\InS(\pi)-u+r$, $\InNB(\tilde{\pi}_{\tilde{f}})=\InNB(\pi_f)+t$, $\SLNB(\tilde{\pi}_{\tilde{f}})= \SLNB(\pi_f)-t$ and $\NB(\tilde{\pi}_{\tilde{f}})=\NB(\pi_f)$. So the exponent of $q$ increases by $r$. Summarizing this, we get the factor $q^r$ and the inner product 
$$\left\langle x_{k+1},\Tens_{\BC} x_{\min(\BC)} \right\rangle =\left\langle x_{\max(\tilde{\textbf{B}}_{\textbf{(C,1)}})}, \OptT_{\tilde{\textbf{B}}_{\textbf{(C,1)}}} x_{\min(\tilde{\textbf{B}}_{\textbf{(C,1)}})} \right\rangle,$$
which is exactly the expression when $\r_q(x_{k+1})$ acts on $\Tens_{\BC} x_{\min(\BC)}$. 
\begin{remark} Note that in the above algorithm, we create a new pair in $Block(\tilde{\pi}_{\tilde{f}})$ with color $1$, whenever we have $\BC\in \Sing(\pi)$. An analogous remark applies to the remainder of the proof.
\end{remark}

\noindent Case 2b. If $\alpha \ell^N_{q}({x_{k+1}})$ acts on the tensor product, then new $p+r+1$ terms appear by using \eqref{lq}. In the $\ith^{\rm th}$ term the inner product $\left\langle x_{k+1}, \overline{\Tens_{\BC} x_{\min(\BC)}}\right\rangle$ appears with coefficient $\alpha q^{p+(p+r+1)-1}$. This means that we create a new block $\tilde{\textbf{B}}_{\textbf{(C,-1)}}\in Block(\tilde{\pi}_{\tilde{f}})$, i.e. the last arc $\{\ith,k+1\}$ has color $-1$ -- see Figure \ref{RestrictedCrossingsPoofPoint3}. Similarly to Case 2a, we calculate the changes of statistics and get: $\rc(\tilde{\pi})=\rc(\pi)+u$, $\InS(\tilde{\pi})=\InS(\pi)-u+r$, $\InNB(\tilde{\pi}_{\tilde{f}})=\InNB(\pi_f)+t$, $\SLNB(\tilde{\pi}_{\tilde{f}})= \SLNB(\pi_f)-t+p$, and $\NB(\tilde{\pi}_{\tilde{f}})=\NB(\pi_f)+1$. Altogether, when moving from $\pi_f$ to $\tilde{\pi}_{\tilde{f}}$, the exponent of $\alpha$ increases by $1$ and the exponent of $q$ increases by $2p+r$, which coincides with the coefficient appearing in the action of $\alpha \ell^N_{q}({x_{k+1}})$. In the end, we get the factor $q^{2p+r}$ and the inner product
$$\left\langle x_{k+1}, \overline{\Tens_{\BC} x_{\min(\BC)}} \right\rangle=\left\langle x_{\max(\tilde{\textbf{B}}_{\textbf{(C,-1)}})}, {\OptT_{\tilde{\textbf{B}}_{\textbf{(C,-1)}}} x_{\min(\tilde{\textbf{B}}_{\textbf{(C,-1)}})}}\right\rangle.$$
 
\begin{figure}[ht]

\begin{center}
\includegraphics[width=0.7\textwidth]{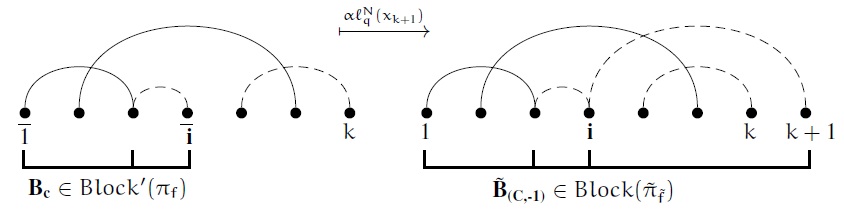}
\caption{ The visualization  of the action   $\alpha\ell^N_{q}({x_{k+1}})$ on $\Tens_{\BC} x_{\min(\BC)}$. 
 }
 \label{RestrictedCrossingsPoofPoint3}
\end{center}
\end{figure}

Case 3. If $\epsilon(k+1)=\prime$, then we have two cases. 
\noindent Case 3a. We use the equation \eqref{rqT}, delete the element $\Tens_{\BC} x_{\min(\BC)}$ from $\CummTensor$ 
 and then a new component in the tensor appears (in the last position $k+1$):
$$\bigotimes_{\substack{ B_c\in Block\primes(\pi_f)\cup \Sing(\pi)\\ B_c\neq \BC }} \Bigg\{\OptTTyl_{B_c}
x_{\min(B_c)}\Bigg\}_{\max(B_c)} \otimes \Bigg\{T_{x_{k+1}}\Big(\Tens_{\BC} x_{\min(\BC)})\Big)\Bigg\}_{k+1}$$
with coefficient $q^r$. Then we get a new partition $\tilde{\pi}_{\tilde{f}} \in \PB_{E;\epsilon}(k+1)$ by adding $k+1$ to $\pi_f$ and creating the block $\tilde{\textbf{B}}_{\textbf{(C,1)}}\in Block\primes(\tilde{\pi}_{\tilde{f}})$ (marked by $\prime$) with the last arc $\{\ith,k+1\}$ (with color $1$, see Figure \ref{RestrictedCrossingsPoofPoint2T}). Now $\max(\tilde{\textbf{B}}_{\textbf{(C,1)}})=k+1$, so we can calculate the change in the statistic generated by the new arc, analogously to Case 2a, because $\max(\tilde{\textbf{B}}_{\textbf{(C,1)}})$ cannot be covered or be to the left of some negative arc, i.e. the exponent of $q$ will increase by $r$. 
This situation is also compatible with changes inside the tensor product, i.e. $\Tens_{\tilde{\textbf{B}}_{\textbf{(C,1)}}} x_{\min(\tilde{\textbf{B}}_{\textbf{(C,1)}})}=T_{x_{k+1}}(\Tens_{\BC} x_{\min(\BC)})$.
 
\begin{figure}[ht]

\begin{center}

\includegraphics[width=0.7\textwidth]{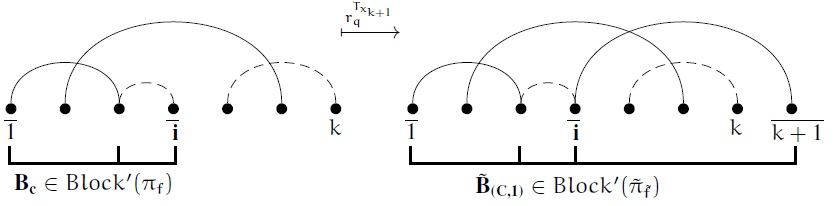}

\caption{  The visualization  of the action $\r_q^{T_{x_{k+1}}}$ on $\Tens_{\BC} x_{\min(\BC)}.$ 
}
 \label{RestrictedCrossingsPoofPoint2T}
\end{center}
\end{figure}

\noindent 
 Case 3b. We use equation \eqref{lqT}, then we get in the $\ith^{\rm th}$ term of the operator $\alpha \ell_{q}^{N,T_{{x_{k+1}}}}$
$$\bigotimes_{\substack{ B_c\in Block\primes(\pi_f)\cup \Sing(\pi)\\ B_c\neq \BC }} \Bigg\{\OptTTyl_{B_c}
x_{\min(B_c)}\Bigg\}_{\max(B_c)} \otimes \Bigg\{T_{x_{k+1}}\Big(\overline{ \Tens_{\BC} x_{\min(\BC)}}\Big)\Bigg\}_{k+1}$$ 
with coefficient $\alpha q^{p+(p+r+1)-1}$. Thus we obtain $\tilde{\pi}_{\tilde{f}} \in \PB_{E;\epsilon}(k+1)$ by adding $k+1$ to $\pi_f$ and create the block marked by $\prime$ with the last arc colored by $-1$, i.e. $\tilde{\textbf{B}}_{\textbf{(C,-1)}}\in Block\primes(\tilde{\pi}_{\tilde{f}})$ -- see Figure \ref{RestrictedCrossingsPoofPoint3T}. Similarly to the Case 3a $\max(\tilde{\textbf{B}}_{\textbf{(C,-1)}})=k+1$ holds, so we can use a change in statistic from Case 2b to get that the exponent of $\alpha$ increases by $1$, the exponent of $q$ increases by $2p+r$ and we have $\Tens_{\tilde{\textbf{B}}_{\textbf{(C,-1)}}} x_{\min(\tilde{\textbf{B}}_{\textbf{(C,-1)}})}=T_{x_{k+1}}\Big(\overline{ \Tens_{\BC} x_{\min(\BC)}}\Big)$.

\begin{figure}[ht]

\begin{center}
\includegraphics[width=0.7\textwidth]{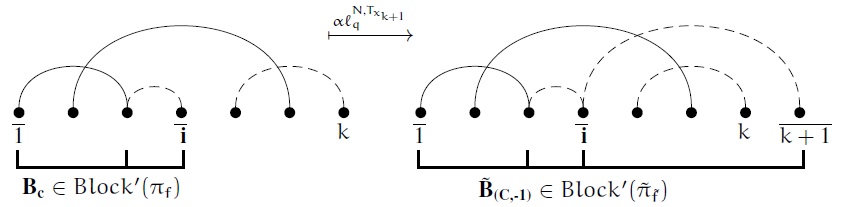}

\caption{ The visualization  of the action   $\alpha  \ell_{q}^{N,T_{{x_{k+1}}}}$ on $\Tens_{\BC} x_{\min(\BC)}$. 
 }
 \label{RestrictedCrossingsPoofPoint3T}
\end{center}
\end{figure}

\noindent Case 4. We have $Block\primes(\pi_f)\cup \Sing(\pi)=\emptyset$, then $\CummTensor=\Omega$ and the action of $\B(x)$ and $p(T_{x})$ on the vacuum vector gives zero, which is compatible with the fact that under this action we cannot create an arc from the $\Omega$. 
\\
\\ 
Note that as $\pi_f$ runs over $\PB_{E;(\epsilon(1),\dots,\epsilon(k))}(k)$, every set partition $$\tilde{\pi}_{\tilde{f}} \in \PB_{E;(\epsilon(1),\dots,\epsilon(k),\epsilon(k+1))}(k+1)$$ appears exactly once in one of Cases 1, 2, 3 or 4, which shows by induction that the formula \eqref{formula101} holds for all $n \in \N$. 
\end{proof}
\noindent
We now present the proof of Theorem \ref{thm2}.
\begin{proof} First, let us notice that for $\epsilon\in\{1,\ast,\prime\}^n$ we have
\begin{align} \label{wick:prawieglowna}
&\state\big( \B^{\epsilon(n)}(x_{n})\cdots \B^{\epsilon(1)}(x_1)\big)=\sum_{\pi_f \in \PB_{\geq 2,\epsilon}(n)} \alpha^{\NB(\pi_f)}q^{\rc(\pi)+2 \InNB(\pi_f)}
\Cumm\end{align}
Indeed, from equation \eqref{formula101} we see that the following condition must hold: $\CummTensor=\Omega$. This will happen if and only if $Block\primes(\pi_f)\cup \Sing(\pi) =\emptyset,$ which implies \eqref{wick:prawieglowna}. From our definition it follows that $\Cum_{\pi_f}=\Cumm$ if $\Sing(\pi)=\emptyset$, so by taking the sum over all $\epsilon$ from equation \eqref{wick:prawieglowna}, we see that
\begin{align*}
\state\Big( \big(\mathcal{B}^{\lambda_n}_{\alpha,q}(x_n)-\lambda_n I\big) \cdots \big(\mathcal{B}^{\lambda_1}_{\alpha,q}(x_1)-\lambda_1 I\big)\Big)=\displaystyle\sum_{\pi_f \in \PB_{\geq 2}(n)} \alpha^{\NB(\pi_f)}q^{\rc(\pi)+2 \InNB(\pi_f)}\Cum_{\pi_f}.
\end{align*}
We also see that for
\begin{align*} 
&\state\Big( \big(\mathcal{B}^{\lambda_n}_{\alpha,q}(x_n)-\lambda_n I+\lambda_n I\big) \cdots \big(\mathcal{B}^{\lambda_1}_{\alpha,q}(x_1)-\lambda_1 I+\lambda_1 I\big)\Big)
\intertext{by eqation \eqref{wick:prawieglowna}, we get}
&=\sum_{\nu \subset [n]} \Bigg[\prod_{i\in\nu} \lambda_i \sum_{\pi_f \in \PB_{\geq 2}([n]\setminus \nu)} \alpha^{\NB(\pi_f)}q^{\rc(\pi)+2 \InNB(\pi_f)}\Cum_{\pi_f}\Bigg],
\intertext{which by induction implies \eqref{wick:glowna}.}
\end{align*} 
\end{proof}

\section{Remarks about $(q,t)$-probability space}
We would like to stress that  this section is only loosely related to the previous part of the article.
\subsection{Blitvi{\'c} model} In this section we will show that very similar and interesting partitions appear in the context of $(q,t)$-probability spaces introduced by Blitvi{\'c} \cite{B12,B14}. First, we remind Blitvi{\'c} construction in the context of right creators. 
\noindent Let $\F$ be the (algebraic) full Fock space over $H$ given in \eqref{fockspace}, with property $ \bar{x}=x$. For $ q \in(-1,1)$ and $|q|< t<1$, we define the type $(q,t)$-symmetrization operator on $H^{\otimes n}$ as
\begin{align*}
\tilde{P}_{q,t}^{(n)}:= t^{n\choose 2} P_{0,q/t}^{(n)} \textrm{, } \qquad \tilde{P}_{q,t}:=\bigoplus_{n=0}^\infty \tilde{P}_{q,t}^{(n)},
\end{align*}
and a corresponding inner product as 
\begin{equation}
\langle x_1 \otimes \cdots \otimes x_m, y_1 \otimes \cdots \otimes y_n\rangle_{\widetilde{q,t}}:=\langle x_1 \otimes \cdots \otimes x_m, \tilde{P}_{q,t}^{(n)}(y_1 \otimes \cdots \otimes y_n)\rangle_{0,0}.
\end{equation}
\begin{Defn} For $ q \in(-1,1)$ and $|q|< t<1$, the algebraic full Fock space $\F$ equipped with the inner product $\langle\cdot,\cdot \rangle_{\widetilde{q,t}}$ is called the \emph{$(q,t)-$Fock space}. Blitvi{\'c} introduced the following annihilation operator:
\begin{align*}
&\A(x)(x_1\otimes \cdots \otimes x_n)= \sum_{k=1}^n t^{k-1}q^{n-k} \langle x, x_k \rangle\, x_1\otimes \cdots \otimes \check{x}_k \otimes \cdots \otimes x_n, &&n\geq 1\\ 
&\A(x)\Omega=0,
\intertext{and the creation operator:}
&\A^\ast(x)(x_1 \otimes \cdots \otimes x_n):=  x_1 \otimes \cdots \otimes x_n \otimes x , &&n\geq 1\\
&\A^\ast(x)\Omega=x,
\end{align*}i.e. $\A^\ast(x)$ is adjoint to $\A(x)$ with respect to the inner product $\langle\cdot,\cdot \rangle_{\widetilde{q,t}}$. The operators $\A^\ast(x)$ and $\A(x)$ are called \emph{$(q,t)$-creation and annihilation operators}. Denote by $\tilde{\state}$ the vacuum vector state $\tilde{\state}(X):=\tilde{\state}_{{q,t}}(X)=\langle\Omega, X\Omega\rangle_{\widetilde{q,t}}$.
\end{Defn}
\begin{remark} We restrict our parameters to $q \in(-1,1)$ and $|q|<t<1$, because then $\tilde{P}_{q,t}$ is strictly positive; see \cite[Lemma 4]{B12}. In the article \cite{B12} the allowed range of parameters is $|q|=t <1$, but then $\tilde{P}_{q,t}$ is positive. From the combinatorial point of view, the main result of this section is that Theorem \ref{thm3}) is also true when $|q|=t <1$.
\end{remark} 
\noindent\textbf{$(t,q)$-gauge operator.} Let $T$ be a bounded and self-adjoint operator on $H$. The corresponding \emph{gauge operator} $\tilde{p}(T)$ is an operator on $\F$, defined by
\begin{align*}
& \pa(T) := t^{n-1} p_0(T) R^{(n)}_{0,q/t}=t^{n-1}p_{0,q/t}(T).
\end{align*}
\noindent An explicit form of the operator $\pa(T)$ on $H^{\otimes n}$ is:
\begin{align*}
\pa(T) (x_1\otimes \cdots \otimes x_n)=\sum_{k=1}^n t^{k-1}q^{n-k} x_1\otimes \cdots \otimes \check{x}_k \otimes \cdots \otimes x_n \otimes T(x_k) .
\end{align*} 
\begin{remark} (1). The above definition of a gauge operator is motivated by simply noticing that an annihilator operator is of the form
$\A(x)= t^{n-1}\r(x) R^{(n)}_{0,q/t}$.

\noindent
(2). It is not difficult to see that when the parameters are restricted to $|q|<t<1$, the operator $\pa(T)$ has properties desired in Propositions \ref{Prop:samosprzezone} and \ref{Ansh+}, i.e. if $T$ is self-adjoint, then $\pa(T)$ is self-adjoint, and if $T$ bounded on $H$, then $\pa(T)$ is bounded on the $(q,t)$-Fock space. Indeed, for $f,g\in H^{\otimes n}$ and by Proposition \ref{Prop:samosprzezone} and \ref{Ansh+}, we have 
\begin{align*} &\langle \pa(T)f, g\rangle_{\widetilde{q,t}}=t^{n-1}t^{n\choose 2} \langle p_{0,q/t}(T) f, g\rangle_{{0,q/t}}=t^{n-1}t^{n\choose 2}\langle f, p_{0,q/t}(T^*)g\rangle_{{0,q/t}}=\langle f, \pa(T^*)g\rangle_{\widetilde{q,t}}
\intertext{and }
&\langle \pa(T)f, \pa(T)f \rangle_{\widetilde{q,t}}=t^{2n-2}t^{n\choose 2} \langle p_{0,q/t}(T) f, p_{0,q/t}(T) f \rangle_{{0,q/t}}\leqslant \big(\max\{1,t/(t-q)\}\big)^2\|T\|^2 \|f\|^2_{0,q/t}.
\end{align*} 
Here we also see that under the assumption $|q|=t <1$ we cannot get the above-mentioned property. This property might also be true but then we need a different argument in order to prove it.
\end{remark}
\noindent Now define the following operators
\begin{equation}
\Y(x):= \A(x) +\A^\ast(x)+\pa(T_x) ,\qquad x \in H_\R, 
\end{equation}
where $T_x$ is a bounded self-adjoint operator on $H$, indexed by $x \in H_\R$. The main theorem of this section is a nice Wick formula, which expresses the joint distribution in the collection of their joint cumulants.
\begin{theorem}\label{thm3} Suppose that $(x_1,\dots,x_n) \in H_\R^n$, 
 then 
\begin{align} 
\tilde{\state}\big( \Y(x_n)\cdots \Y(x_{ 1})\big)=\displaystyle\sum_{\pi\in \P_{\geq 2}(n)} q^{\rc(\pi)}t^{\InNA(\pi)}\prod_{B \in\pi }\Big\langle x_{\max(B)}, \prod_{i\in B, i\neq \min(B),\max(B)} T_{x_{i}}x_{\min(B)}\Big\rangle. \label{RownanieGlownetwierdzenieBlitvic}
\end{align}
\end{theorem}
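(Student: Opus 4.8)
The plan is to reduce the $(q,t)$-computation to the already-established type~A ($\alpha=0$) Wick formula of Theorem~\ref{twr:momentogolne} at the shifted parameter $q/t$, and then to convert the resulting powers of $q/t$ into the stated $q$- and $t$-weights by a single combinatorial identity. First I would record that, since the inner product $\langle\cdot,\cdot\rangle_{\widetilde{q,t}}$ is graded (tensors of different degrees are orthogonal) and is normalised at degree $0$, the state $\tilde\state(X)=\langle\Omega,X\Omega\rangle_{\widetilde{q,t}}$ is simply the coefficient of $\Omega$ in $X\Omega$. Thus it suffices to expand $\Y^{\epsilon(n)}(x_n)\cdots\Y^{\epsilon(1)}(x_1)\Omega$ for each $\epsilon\in\{1,\ast,\prime\}^n$ and extract the scalar part, exactly as in the proof of Theorem~\ref{thm2}.

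The key observation is a \emph{uniform} factorisation. On $H^{\otimes m}$ one has $\A(x)=t^{m-1}\,\r(x)R^{(m)}_{0,q/t}$ and $\pa(T)=t^{m-1}p_{0,q/t}(T)$ (by the definition of $\pa$ and by Theorem~\ref{thm1} with $\alpha=0$), while $\A^{\ast}(x)$ is the parameter-free right creator. Comparing the $k$-th summand of the $(q,t)$-annihilator, with coefficient $t^{k-1}q^{m-k}$, to the $k$-th summand of the $(0,q/t)$-annihilator, with coefficient $(q/t)^{m-k}$, the ratio is $t^{m-1}$ for \emph{every} $k$, and likewise for the gauge operators. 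Since each creator raises and each annihilator lowers the tensor degree by one (the gauge preserves it) independently of which summand is chosen, the degree $m_i$ seen by the $i$-th operator depends only on $\epsilon$, not on the branch. Consequently the whole $(q,t)$-word equals $t^{E(\epsilon)}$ times the corresponding word in the operators $\r(x)R^{(\bullet)}_{0,q/t}$, $\A^{\ast}(x)$, $p_{0,q/t}(T_x)$ applied to $\Omega$, where $E(\epsilon)=\sum_{i:\,\epsilon(i)\neq\ast}(m_i-1)$; these latter operators are precisely the $(\alpha,q)$-operators of Theorem~\ref{twr:momentogolne} at $\alpha=0$ with $q$ replaced by $q/t$.

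Now I would invoke Theorem~\ref{twr:momentogolne} with $\alpha=0$, $q\mapsto q/t$ to expand this word over extended partitions; at $\alpha=0$ all arcs are forced positive, so $\InNB=\SLNB=0$ and the $q$-exponent is $\rc(\pi)+\InS(\pi)$. Applying the state kills every term with $\CummTensor\neq\Omega$, leaving only $\pi\in\P_{\geq2}(n)$ (no singletons, no marked blocks, hence $\InS(\pi)=0$) with weight $(q/t)^{\rc(\pi)}\Cumm$, where $\Cumm$ reduces to the product in \eqref{RownanieGlownetwierdzenieBlitvic} since all colours are $1$. Summing over $\epsilon$ and using the bijection sending each $\pi\in\P_{\geq2}(n)$ to the unique compatible word $\epsilon(\pi)$ (minima $\mapsto\ast$, maxima $\mapsto$ annihilator, interior points $\mapsto$ gauge), I obtain
\begin{equation*}
\tilde\state\big(\Y(x_n)\cdots\Y(x_1)\big)=\sum_{\pi\in\P_{\geq2}(n)} q^{\rc(\pi)}\,t^{\,E(\epsilon(\pi))-\rc(\pi)}\,\Cumm .
\end{equation*}

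It remains to prove the purely combinatorial identity $E(\epsilon(\pi))-\rc(\pi)=\InNA(\pi)=\#\nest(\pi)$, the number of restricted nestings; this is the main obstacle. Here $m_i$ equals the number of blocks \emph{open} at $i$, namely $o(i)=\#\{B\in\pi:\min(B)<i\leq\max(B)\}$, so that $E(\epsilon(\pi))=\sum_{(a,b)\in\Semi(\pi)}(o(b)-1)$, the sum running over arcs with larger endpoint $b$. I would then check that $o(b)-1=\#\{(a',b')\in\Semi(\pi):a'<b<b'\}$, the number of arcs strictly straddling $b$: every open block other than that of $(a,b)$ contributes exactly one straddling arc, and no arc of $(a,b)$'s own block straddles $b$. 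Finally, for any two distinct arcs exactly one of the two straddling relations holds precisely when they cross or when they nest, and neither holds when they are disjoint; summing, $E(\epsilon(\pi))$ counts each crossing once and each nesting once, i.e.\ $E(\epsilon(\pi))=\rc(\pi)+\#\nest(\pi)$. Combined with $(q/t)^{\rc}=q^{\rc}t^{-\rc}$ this produces the factor $t^{\#\nest(\pi)}=t^{\InNA(\pi)}$ and completes the proof. Alternatively one may bypass the reduction and reprove \eqref{RownanieGlownetwierdzenieBlitvic} by the same induction as in Theorem~\ref{twr:momentogolne}, tracking the factor $t^{k-1}$ produced at each annihilation or gauge step; the bookkeeping there is exactly the above straddle identity verified one arc at a time.
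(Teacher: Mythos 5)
Your argument is correct, but it follows a genuinely different route from the paper's. The paper proves \eqref{RownanieGlownetwierdzenieBlitvic} by rerunning the induction of Theorem~\ref{twr:momentogolne} directly in the $(q,t)$-setting: all arcs are colored $1$, the statistic $\SLNB$ is replaced by $\SLNBlitvic$ (maxima of singletons and marked blocks lying strictly to the left of an arc), the induction hypothesis carries the weight $q^{\rc(\pi)+\InS(\pi)}t^{\InNA(\pi)+\SLNBlitvic(\pi)}$, and in the analogues of Cases 1, 2a, 3a one checks locally that the exponent of $q$ grows by $r$ and that of $t$ by $p$. You instead use the uniform factorizations $\A(x)=t^{m-1}\r(x)R^{(m)}_{0,q/t}$ and $\pa(T)=t^{m-1}p_{0,q/t}(T)$ (the first of which the paper itself records in a remark), together with the observation that the intermediate tensor degree depends only on the word $\epsilon$ and not on the branch, to extract a global factor $t^{E(\epsilon)}$ and reduce the whole computation to Theorem~\ref{twr:momentogolne} at $\alpha=0$ with $q$ replaced by $q/t$; after applying the vacuum state only $\pi\in\P_{\geq 2}(n)$ survive (so $\InS=0$, all colours are $1$, and the unique compatible $\epsilon(\pi)$ sends minima to $\ast$, interior points to $\prime$, maxima to the annihilator), and the conversion of $(q/t)^{\rc(\pi)}t^{E(\epsilon(\pi))}$ into $q^{\rc(\pi)}t^{\InNA(\pi)}$ rests entirely on your straddle identity $E(\epsilon(\pi))=\rc(\pi)+\#\nest(\pi)$, whose verification is sound: exactly one of the two straddle relations holds for a crossing or a nesting pair of arcs, none for a disjoint pair, and arcs of a common block never straddle one another's right endpoints. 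This modular approach buys a cleaner proof that uses Theorem~\ref{twr:momentogolne} as a black box and isolates all the new combinatorics in one self-contained lemma, whereas the paper's inductive bookkeeping (with the auxiliary statistics $\InS$ and $\SLNBlitvic$ tracked step by step) is the form of the argument that extends to the coloured type-B situation; indeed your closing alternative, redoing the induction while tracking the factor $t^{k-1}$, is essentially the paper's own sketch. The only point deserving one extra sentence in a written version is the degenerate case where an annihilation or gauge operator meets $\Omega$: there both sides of your factorization vanish, so the formal factor $t^{m_i-1}$ with $m_i=0$ is harmless and $\PB_{E;\epsilon}(n)=\emptyset$ on the combinatorial side.
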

\begin{remark} (1). The proof is similar in spirit to the proof of Theorem \ref{twr:momentogolne}. This is not entirely obvious but we leave the formal proof to the reader (we just sketch a heuristic proof), because it can be obtained by modifications of Theorem \ref{twr:momentogolne}. 

\noindent(2). Similarly as in Corollary \ref{cor13}, we can state that for $t\to 1$, we obtain the $q$-deformed formula for moments of $q$-random variable from the article \cite{Ans01,Ans04b}, and for $T=\mathbf{0}$ we get the formula for moments of $(q,t)$-Gaussian operator from \cite{B12}. 

\noindent(3). The number of \emph{restricted nestings} is defined as $\InNA(\pi):=\#\{ (V,W)\in \nest(\pi)\}$, i.e. it is a number of covered arcs. For a partition in Figure \ref{RestrictedCrossingsBlitvic}, we see that $\rc{(\pi)}=7$ and $\InNA(\pi)=5$. Partitions with restricted crossings and nestings appear in this context in many combinatorial articles; see \cite{KZ06,ChDRS07,RS10}.

\begin{figure}[ht]

\begin{center}
\includegraphics[width=0.7\textwidth]{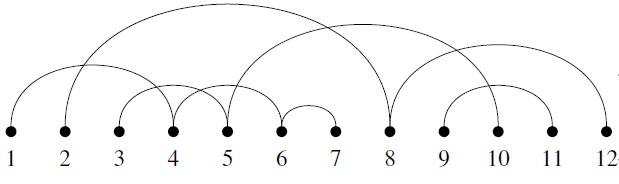}

\caption{A partition of $12$ elements with  four blocks 
}
 \label{RestrictedCrossingsBlitvic} 
\end{center}
\end{figure} 
\end{remark}

\noindent{\emph{The following is the sketch of a proof: }}Let us first observe that it is sufficient to focus on Cases 1, 2a and 3a of Theorem \ref{twr:momentogolne}. Here, we just emphasize how to modify these cases in order to obtain Theorem \ref{thm3}. We keep the notation from this proof, where we assume that all arcs have color $1$, and  the corresponding analog of the statistic $\SLNB$ is redefined as
$$\SLNBlitvic(\pi):=\#\big\{ (V,W)\in \big[Block\primes(\pi)\cup \Sing(\pi)\big]\times \Semi(\pi) \mid \max(V)<i \textrm{ for } i\in W\big\}.$$
In the induction step we assume that 
\begin{align*}
\A^{\epsilon(k)}(x_{k})\cdots \A^{\epsilon(1)}(x_1)\Omega = \sum_{\pi\in\PB_{E;\epsilon}(k)} q^{\rc(\pi) +\InS(\pi)}t^{ \InNA(\pi)+\SLNBlitvic(\pi) } {\mathrm{R}}^\mb{x}_{\pi}\widehat{\mathrm{R}}^\mb{x}_{\pi},
\end{align*}
and analogical map $\pi\to \tilde{\pi}$ from the proof of Theorem \ref{twr:momentogolne} is obtained in the following way:
\begin{enumerate}
\item in Case 1 the operator $ \A^\ast(x_{k+1})$ corresponds to adding the singleton $\{k+1\}$ to $\Sing(\tilde{\pi})$;
\item in Case 2a the annihilator $\A(x_{k+1})$ contributes to a new block in $Block(\tilde{\pi})$, with the last arc $\{\ith,k+1\}$ and $\rc(\tilde{\pi})=\rc(\pi)+u$, $\InS(\tilde{\pi})=\InS(\pi)-u+r$, $\InNA(\tilde{\pi})=\InNA(\pi)+s+t$ and $\SLNBlitvic(\tilde{\pi})=\SLNBlitvic(\pi)+p-(t+s)$. So the exponent of $q$ increases by $r$ and the exponent of $t$ increases by $p$;
\item in Case 3a the gauge operator $\pa(T_{x_{k+1}})$ contributes to a new block in $Block\primes(\tilde{\pi})$ and the change in the statistic is the same as in case (b). 
\end{enumerate} 
Finally, when we put $Block\primes({\pi})\cup \Sing(\pi)=\emptyset$, we obtain the formula \eqref{RownanieGlownetwierdzenieBlitvic}.
\begin{flushright}$\qed$ \end{flushright}
\noindent \textbf{\emph{The orthogonal polynomial.}}
\label{Subsec:Poisson}
\begin{Defn}$(q,t)$-Poisson polynomials are defined by the recursion relations:
\begin{align} \label{wielomianytfreepoisson}
y\tilde{P}_{ n}^{(q,t)}( y) &= \tilde{P}_{ n+1}^{(q,t)}(y) + [n]_{q,t} \tilde{P}_{ n}^{(q,t)}( y) + [n]_{q,t} \tilde{P}_{n-1}^{(q,t)}( y), \qquad n\geq 1
\end{align}
where $[n]_{q,t}=\sum_{i=1}^nq^{i-1}t^{n-i},$ and initial conditions $\tilde{P}_{-1}^{(q,t)}(y) = 0$, $\tilde{P}_{0}^{(q,t)}(y) = 1$ and $\tilde{P}_{1}^{(q,t)}( y) = y$. Let $\tilde{\mu}_{q,t}$ be a probability measure, which associates the orthogonal polynomials $\tilde{P}_{ n}^{(q,t)}$. 
\end{Defn}
\begin{Prop} Suppose that $x \in H, \|x\|=1$ and $T=Id$. Then the probability distribution of $\Y $ with respect to the vacuum state is given by $\tilde{\mu}_{q,t}$.
\end{Prop}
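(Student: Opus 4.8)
The plan is to mimic the proof of Proposition~\ref{WielomianyTypeB} line by line, substituting the $(q,t)$-data for the $(\alpha,q)$-data. Since in the Blitvi\'c model $\bar x=x$ and here $T=\mathrm{Id}$, I would first record the action of the three summands of $\Y(x)$ on the unit tensor $x^{\otimes n}$:
\begin{align*}
\A^\ast(x)\,x^{\otimes n}&=x^{\otimes(n+1)},\\
\A(x)\,x^{\otimes n}&=\Big(\sum_{k=1}^n t^{k-1}q^{n-k}\Big)x^{\otimes(n-1)}=[n]_{q,t}\,x^{\otimes(n-1)},\\
\pa(\mathrm{Id})\,x^{\otimes n}&=\Big(\sum_{k=1}^n t^{k-1}q^{n-k}\Big)x^{\otimes n}=[n]_{q,t}\,x^{\otimes n},
\end{align*}
where the reindexing $i=n-k+1$ turns $\sum_k t^{k-1}q^{n-k}$ into $\sum_i q^{i-1}t^{n-i}=[n]_{q,t}$, and I used $\langle x,x\rangle=1$ and $\mathrm{Id}(x)=x$. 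Summing, $\Y(x)\,x^{\otimes n}=x^{\otimes(n+1)}+[n]_{q,t}x^{\otimes n}+[n]_{q,t}x^{\otimes(n-1)}$, which is exactly the three-term recurrence \eqref{wielomianytfreepoisson} with $\beta_n=\gamma_{n-1}=[n]_{q,t}$. An induction identical to the one in Proposition~\ref{WielomianyTypeB} then yields $\tilde P_{n}^{(q,t)}(\Y(x))\Omega=x^{\otimes n}$ for all $n\ge 0$.

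Next I would promote this intertwining to an isometry. The tensors $x^{\otimes n}$ are mutually orthogonal for $\langle\cdot,\cdot\rangle_{\widetilde{q,t}}$ because they lie in distinct tensor levels, so the map $\Phi(x^{\otimes n}):=\tilde P_{n}^{(q,t)}(y)$ is well defined and intertwines $\Y(x)$ with multiplication by $y$ on $L^2(\R,\tilde\mu_{q,t})$. The one point requiring a genuine computation is that $\Phi$ preserves norms. On the $L^2$-side $\|\tilde P_{n}^{(q,t)}\|_{L^2}^2=\prod_{k=1}^n\gamma_{k-1}=\prod_{k=1}^n[k]_{q,t}$ by the standard orthogonal-polynomial identity; on the Fock side $\|x^{\otimes n}\|_{\widetilde{q,t}}^2=t^{\binom n2}\langle x^{\otimes n},P_{0,q/t}^{(n)}x^{\otimes n}\rangle_{0,0}=t^{\binom n2}\sum_{\sigma\in S(n)}(q/t)^{|\sigma|}$, and using $\sum_{\sigma\in S(n)}s^{|\sigma|}=\prod_{k=1}^n\frac{1-s^k}{1-s}$ together with $[k]_{q,t}=t^{k-1}\sum_{j=0}^{k-1}(q/t)^j$ one checks the two quantities agree. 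Hence $\Phi$ is an isometry onto the polynomials, and therefore $\langle\Omega,\Y(x)^k\Omega\rangle_{\widetilde{q,t}}=m_k(\tilde\mu_{q,t})$ for every $k$.

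Finally I would invoke boundedness and determinacy to pass from equality of moments to equality of measures. The operators $\A(x),\A^\ast(x)$ and, by the boundedness remark above, $\pa(\mathrm{Id})$ are bounded for $|q|<t<1$, so $\Y(x)$ is bounded and its vacuum distribution is compactly supported; equivalently, the Jacobi parameters $[n]_{q,t}=\frac{t^n-q^n}{t-q}$ are bounded (indeed tend to $0$), so the associated moment problem is determinate. Consequently the vacuum distribution of $\Y(x)$ is uniquely $\tilde\mu_{q,t}$.

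I expect the main obstacle to be purely bookkeeping: verifying the norm identity $\|x^{\otimes n}\|_{\widetilde{q,t}}^2=\prod_{k=1}^n[k]_{q,t}$ that makes $\Phi$ an isometry. This can be sidestepped altogether by arguing spectrally: the relation $x^{\otimes n}=\tilde P_{n}^{(q,t)}(\Y(x))\Omega$ shows that, under the cyclic-subspace isomorphism furnished by the spectral theorem for the self-adjoint operator $\Y(x)$, the orthogonal vectors $x^{\otimes n}$ correspond to the monic orthogonal polynomials of the spectral (= vacuum) measure; the recurrence computed above then identifies that measure's Jacobi parameters with those of $\tilde\mu_{q,t}$, and determinacy concludes the proof.
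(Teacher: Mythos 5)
Your proposal is correct and follows essentially the same route as the paper: verify the three-term recurrence action of $\Y(x)$ on $x^{\otimes n}$, conclude by induction that $\tilde P_n^{(q,t)}(\Y(x))\Omega=x^{\otimes n}$, pass to an isometry onto $L^2(\R,\tilde\mu_{q,t})$, and finish with boundedness of $\Y(x)$ and moment determinacy. The only difference is that you verify the norm identity $\|x^{\otimes n}\|_{\widetilde{q,t}}^2=\prod_{k=1}^n[k]_{q,t}$ explicitly (and correctly), a detail the paper subsumes in the phrase ``the rest of the argument is similar to that of Proposition \ref{WielomianyTypeB}.''
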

\begin{proof} Note that for $n=1$ $\tilde{P}_{1}^{(q,t)}(\Y(x))\Omega =Y(x)\Omega=x$ and by induction
\begin{align*}
&\tilde{P}_{ n+1}^{(q,t)}( \Y(x))\Omega =\Y(x) \tilde{P}_{ n}^{(q,t)}( \Y(x))\Omega-[n]_{q,t} \tilde{P}_{ n}^{(q,t)}( \Y(x))\Omega - [n]_{q,t} \tilde{P}_{n-1}^{(q,t)}( \Y(x))\Omega\\&=\Y(x) x^{\otimes n}-[n]_{q,t}x^{\otimes n}- [n]_{q,t}x^{\otimes (n-1)}=x^{\otimes n+1}.
\end{align*}
The rest of argument is similar to that of the Proposition \ref{WielomianyTypeB}.
\end{proof}
\begin{Cor} By \cite[Propositions 7A and 7B]{F80} or \cite[Proposition 4.1]{KZ06} we conclude that the moment generating function of the measure $\tilde{\mu}_{q,t}$ has the following elegant continued fraction expansion:
\begin{align} 
\sum_{n\geq 0}m_n(\tilde{\mu}_{q,t})z^n=\cfrac{1}{1-z -\cfrac{z^2}{1-[1]_{q,t}z-\cfrac{[2]_{q,t}z^2}{1-[2]_{q,t}z-\cfrac{[3]_{q,t}z^2}{{1-[3]_{q,t}z-\cfrac{[4]_{q,t}z^2}{\ddots}}}}}}. \label{cotinuedfraction}
\end{align} 
\end{Cor}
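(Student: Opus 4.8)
The plan is to obtain \eqref{cotinuedfraction} by feeding the three--term recurrence \eqref{wielomianytfreepoisson} into the classical correspondence between monic orthogonal polynomials and Jacobi--type continued fractions, which is exactly what \cite[Propositions 7A and 7B]{F80} and \cite[Proposition 4.1]{KZ06} provide. Recall the shape of that correspondence: if a probability measure $\mu$ with finite moments of all orders has monic orthogonal polynomials obeying $y P_n(y)=P_{n+1}(y)+\beta_n P_n(y)+\gamma_{n-1}P_{n-1}(y)$, then its ordinary moment generating function has the $J$-fraction expansion
\begin{equation*}
\sum_{n\geq 0}m_n(\mu)\,z^n=\cfrac{1}{1-\beta_0 z-\cfrac{\gamma_0 z^2}{1-\beta_1 z-\cfrac{\gamma_1 z^2}{1-\beta_2 z-\cfrac{\gamma_2 z^2}{\ddots}}}},
\end{equation*}
with the diagonal weights $\beta_n$ appearing in the successive denominators and the off-diagonal data $\gamma_n$ appearing as the numerators.

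First I would read off the Jacobi parameters of $\tilde{\mu}_{q,t}$. Matching \eqref{wielomianytfreepoisson} against the generic recurrence gives $\beta_n=[n]_{q,t}$ and $\gamma_{n-1}=[n]_{q,t}$ for $n\geq 1$, hence $\gamma_n=[n+1]_{q,t}$, while the initial datum $\tilde{P}_1^{(q,t)}(y)=y$ fixes the top-level coefficient; inserting these weights into the $J$-fraction above and using $[1]_{q,t}=1$ then reproduces \eqref{cotinuedfraction} level by level. As an independent check one can instead route through the combinatorial half of \cite{F80}: Theorem \ref{thm3} with $T=\mathrm{Id}$ and a single unit vector $x$ (with $\bar{x}=x$) collapses to $m_n(\tilde{\mu}_{q,t})=\sum_{\pi\in\P_{\geq 2}(n)}q^{\rc(\pi)}t^{\InNA(\pi)}$, and this is precisely the Motzkin-path generating function attached to the weight system $\beta_h,\gamma_h$, the statistics $\rc$ and $\InNA$ turning into the products of level- and descent-weights along the path.

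The only genuine point to verify, and the one I expect to be the main (though mild) obstacle, is that this formal continued-fraction identity really equals the analytic moment generating function of $\tilde{\mu}_{q,t}$. This is settled by compact support: for $|q|<t<1$ the operator $\Y(x)$ is bounded on the $(q,t)$-Fock space, since each of $\A(x),\A^\ast(x)$ and $\pa(T_x)$ is bounded there (the estimate for $\pa(T_x)$ being the one recorded in the remark following its definition), so by the preceding proposition $\tilde{\mu}_{q,t}$ is the vacuum distribution of a bounded self-adjoint operator and is therefore compactly supported. Consequently the Hamburger moment problem for $(m_n(\tilde{\mu}_{q,t}))_n$ is determinate and the associated Jacobi matrix is bounded, so the continued fraction converges and its Taylor coefficients coincide with the true moments; the positivity $\gamma_{n-1}=[n]_{q,t}>0$ (valid because $|q|<t$) also secures, through Favard's theorem, that \eqref{wielomianytfreepoisson} really orthogonalizes a measure, so that the hypotheses of \cite{F80,KZ06} are in force.
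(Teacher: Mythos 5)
Your overall route is the paper's own: the corollary is stated as an immediate application of the Flajolet/Kasraoui--Zeng correspondence between the Jacobi parameters of a three-term recurrence and the $J$-fraction of the moment generating function, and your closing paragraph on compact support and determinacy (boundedness of $\A(x)$, $\A^\ast(x)$, $\pa(T_x)$ for $|q|<t<1$) is sound extra diligence that the paper leaves implicit. However, there is a genuine failure at the step where you claim that inserting the weights ``reproduces \eqref{cotinuedfraction} level by level'': it does not, at the very top level. Your own matching gives $\beta_0=0$, since $\tilde{P}_1^{(q,t)}(y)=y$ means $y\tilde{P}_0=\tilde{P}_1+0\cdot\tilde{P}_0$; hence the zeroth denominator of the $J$-fraction is $1-\beta_0z=1$, whereas the displayed formula \eqref{cotinuedfraction} has $1-z$, i.e.\ $\beta_0=1$. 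These are incompatible, and the displayed version contradicts the measure itself: $\tilde{\mu}_{q,t}$ is the vacuum law of $\Y(x)$ with $\Y(x)\Omega=x$, so $m_1=\langle\Omega,x\rangle_{\widetilde{q,t}}=0$ and $m_2=\|x\|^2=1$ (equivalently, Theorem \ref{thm3} sums over $\P_{\geq2}(n)$ and $\P_{\geq2}(1)=\emptyset$), while $1/(1-z-z^2/(1-[1]_{q,t}z-\cdots))=1+z+2z^2+\cdots$ would force $m_1=1$ and $m_2=2$.

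The combinatorial cross-check you sketch would have exposed this had you carried it out: in the Motzkin-path encoding of partition statistics, level steps at height $0$ correspond exactly to singletons, and since $\P_{\geq 2}(n)$ excludes singletons the height-$0$ level weight vanishes, so the top denominator carries no $-z$ term. (The ``$1-z$'' shape is the one in Kasraoui--Zeng's Proposition 4.1, which counts \emph{all} set partitions, singletons included; the corollary as printed appears to have inherited it as a misprint.) The correct output of your argument is
\[
\sum_{n\geq 0}m_n(\tilde{\mu}_{q,t})z^n=\cfrac{1}{1-\cfrac{z^2}{1-[1]_{q,t}z-\cfrac{[2]_{q,t}z^2}{1-[2]_{q,t}z-\cfrac{[3]_{q,t}z^2}{\ddots}}}},
\]
which is also confirmed by the paper's own Figure \ref{LiczenieMomentowTfree}: with $q=0$, $T=\mathrm{Id}$, $\|x\|=1$ the path expansion with $\beta_0=0$, $\beta_h=[h]_{0,t}=t^{h-1}$, $\gamma_{h-1}=[h]_{0,t}$ yields $\tilde{\state}_{0,t}\big(\mathrm{Y}_{0,t}^5(x)\big)=3+2t+t^2$, exactly as stated there, whereas the printed fraction does not. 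So your method is right and essentially identical to the paper's, but asserting a level-by-level match with \eqref{cotinuedfraction} is false as written; a complete solution must either correct the top level of the continued fraction or flag the inconsistency in the statement.
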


\subsection{$t$-deformed free probability}When $q=0$ and $t\in(0,1)$, the case is reduced to a new $t$-deformed free probability (the term ‘$t$-free probability’ also appeared in \cite{BW01}, but in a completely different context of $t$-transformation of measures). Blitvi{\'c} \cite{B12} showed that the statistics of the $t$-deformed semicircular element can be described in an elegant form drawing from the deformed Catalan numbers, the generalized Rogers-Ramanujan continued fraction, and the $t$-Airy function of Ismail \cite{I05}. 
In light of its present interpretation, a $t$-deformed free probability resembles the combinatorial theory of free probability of Voiculescu \cite{V85}. In a $t$-deformed free probability, when we illustrate noncrossing partitions graphically, we connect all consecutive points in a block by a semicircle, and count how many arcs are covered by other arcs; see Figure \ref{LiczenieMomentowTfree}. In particular, we obtain Voiculescu  probability when we pass with $t$ to $1$. This situation was described for pair partitions in \cite{B12}, where it was shown that moments of $t$-semicircular element can be represented by noncrossing nesting pair partitions.

\begin{figure}[h]

\begin{center}
\includegraphics[width=0.7\textwidth]{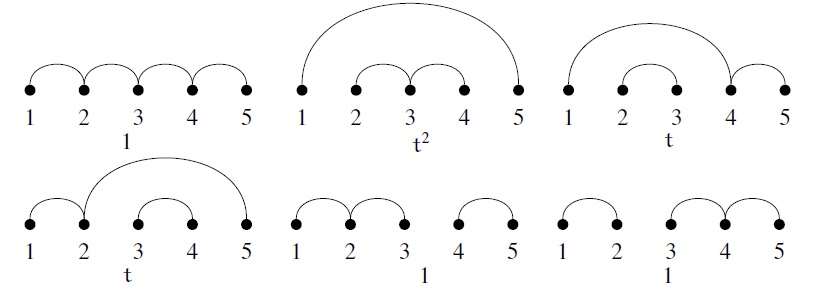}

\caption{Noncrosing cumulants and partitions  for  $\tilde{\state}_{{0,t}}\big(\mathrm{Y}_{0,t}^5(x)\big)=t^2+2t+3$,  with $T=Id$ and $\|x\|=1$. }
 \label{LiczenieMomentowTfree}
\end{center}
\end{figure}

\noindent \textbf{\emph{$t$-free Poisson distribution.}} Al-Salam and Ismail in \cite[Equation 5.6]{AI83} introduced the three-term recurrence 
\begin{align}
y U_n(y,a,b) = U_{n+1}(y,a,b) -at^n U_n(y,a,b) + bt^{n-1} U_{n-1}(y,a,b), \qquad n\geq 1 \label{IsmailSalam1}
\end{align}
with $U_{0}(y)=1$ and $U_{1}(y)=cy$. They also showed \cite[Theorem 5.1]{AI83}, that when $c>0$, $b>0$, $1+at>0$ and $t\in(0,1)$, then there exists a unique purely discrete positive measure with bounded support orthogonalizing polynomial \eqref{IsmailSalam1}.
\begin{Defn}
 For $t\in(0,1)$, we call the measure $\tilde{\mu}_{0,t}$ the $t$-free Poisson or $t$-Marchenko-Pastur distribution.
\end{Defn}

\begin{Prop} \label{DowodDiscritePoisson}
The measure $\tilde{\mu}_{0,t}$ is a purely discrete positive measure with bounded support.
\end{Prop}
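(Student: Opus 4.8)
The plan is to identify the polynomials $\tilde P_n^{(0,t)}$ with the Al-Salam--Ismail polynomials $U_n(y,a,b)$ of \eqref{IsmailSalam1} by matching the three-term recurrences, and then invoke \cite[Theorem 5.1]{AI83}. Setting $q=0$ in \eqref{wielomianytfreepoisson}, the coefficients become $[n]_{0,t}=t^{n-1}$, so the recursion reads
\begin{align*}
y\tilde P_n^{(0,t)}(y)=\tilde P_{n+1}^{(0,t)}(y)+t^{n-1}\tilde P_n^{(0,t)}(y)+t^{n-1}\tilde P_{n-1}^{(0,t)}(y),\qquad n\geq 1,
\end{align*}
with $\tilde P_0^{(0,t)}=1$ and $\tilde P_1^{(0,t)}(y)=y$. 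On the Al-Salam--Ismail side, \eqref{IsmailSalam1} with $c=1$ has the same leading normalization, and comparing the coefficients of $U_n$ and $U_{n-1}$ suggests taking $a=-t^{-1}$ (so that $-at^n=t^{n-1}$) and $b=t$ (so that $bt^{n-1}=t^{n-1}$). First I would verify that this choice of $(a,b,c)$ reproduces \eqref{wielomianytfreepoisson} at $q=0$ for every $n$, including the boundary cases $n=0,1$.

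Next I would check that the parameters fall inside the admissible region of \cite[Theorem 5.1]{AI83}. With $c=1>0$ we have $c>0$; with $b=t$ and $t\in(0,1)$ we have $b>0$; and $1+at=1+(-t^{-1})t=1-1=0$, so the strict inequality $1+at>0$ is unfortunately \emph{not} satisfied but sits exactly on the boundary. The main obstacle is precisely this degenerate value $1+at=0$: the cited theorem guarantees a purely discrete measure with bounded support only under $1+at>0$, so I cannot apply it verbatim. I would resolve this either by tracing through the Al-Salam--Ismail argument to confirm that the conclusion (pure point spectrum, bounded support) persists in the limiting case $1+at=0$, or by a direct spectral argument: since $\tilde\mu_{0,t}$ is the vacuum distribution of the bounded self-adjoint operator $\Y(x)$ (by the preceding Proposition and Proposition~\ref{Ansh+} specialized to $q=0$), the support is automatically compact, which already yields boundedness independently of \cite{AI83}.

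For the purely discrete part, I would analyze the Jacobi parameters $\beta_n=t^{n-1}$, $\gamma_{n-1}=t^{n-1}$ directly. Because $\beta_n\to 0$ and $\gamma_n\to 0$ geometrically as $n\to\infty$, the associated Jacobi matrix is a compact perturbation of the zero matrix, so its essential spectrum is $\{0\}$; hence the spectrum consists of $0$ together with a sequence of isolated eigenvalues accumulating only at $0$, which is exactly the statement that $\tilde\mu_{0,t}$ is purely discrete (atomic) with bounded support. I expect the cleanest writeup to combine the compactness of $\Y(x)$ for boundedness of support with this decay-of-Jacobi-parameters argument for discreteness, using \cite[Theorem 5.1]{AI83} only as corroboration once the boundary value $1+at=0$ is handled. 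The delicate point to get right will be justifying that the eigenvalues accumulate only at $0$ and nowhere else, i.e. that $0$ is the sole point of the essential spectrum.
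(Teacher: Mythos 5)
Your proposal is essentially correct, but it takes a different route from the paper. You rightly observe that the naive matching of \eqref{wielomianytfreepoisson} at $q=0$ with \eqref{IsmailSalam1} forces $a=-t^{-1}$, $b=t$, which lands exactly on the excluded boundary $1+at=0$ of \cite[Theorem 5.1]{AI83}; the paper's way around this is a dilation trick you did not find: it takes $a=-1$, $b=t^2$, $c=1$ (so $1+at=1-t>0$) and substitutes $L_n(y)=U_n(ty,-1,t^2)/t^n$, which turns the Al-Salam--Ismail recurrence into the $q=0$ recurrence \eqref{wielomianytfreepoisson}; hence $\tilde P_n^{(0,t)}$ orthogonalize the law of $t\,\mathrm{Y}_{0,t}(x)$, the cited theorem applies to that law, and pure discreteness and bounded support survive the dilation by $t^{-1}$. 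Your alternative -- bypassing \cite{AI83} altogether by noting that the Jacobi parameters $\beta_n=t^{n-1}$, $\gamma_{n-1}=t^{n-1}$ decay geometrically, so the associated Jacobi operator is compact, its spectrum is a countable set accumulating only at $0$, and therefore the (determinate, compactly supported) orthogonality measure is purely atomic with bounded support -- is a sound and self-contained argument; a measure whose closed support is countable is automatically purely atomic, so the only point you should state explicitly in a final writeup is that $\tilde\mu_{0,t}$ is the spectral measure of this compact Jacobi matrix at the vector $e_0$ (equivalently, the vacuum distribution of the bounded operator $\mathrm{Y}_{0,t}(x)$). What each approach buys: yours is more elementary and independent of the Al-Salam--Ismail parameter restrictions, while the paper's rescaling identifies $\tilde\mu_{0,t}$ explicitly as a dilated Al-Salam--Ismail measure, which is what links it to the generalized Rogers--Ramanujan continued fraction discussed afterwards.
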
 
\begin{proof}Recall that random variables $\mathrm{Y}_{0,t}(x)$, with $\|x\|=1$ have the distribution $\tilde{\mu}_{0,t}$, i.e. they ortogonalize \eqref{wielomianytfreepoisson}, with $q=0$. If we put $a=-1$, $b=t^2$ and $c=1$ in the recurrence \eqref{IsmailSalam1} then $1+at=1-t>0$ for $t\in(0,1)$ and
\begin{align}
y U_n(y,-1,t^2) = U_{n+1}(y,-1,t^2) +t^n U_n(y,-1,t^2) + t^{n+1} U_{n-1}(y,-1,t^2), \qquad n\geq 1. \label{pomtfreepoisson}
\end{align}
Now, let us substitute $L_n(y)=U_n(yt,-1,t^2)/t^n$, multiply \eqref{pomtfreepoisson} by $t^{-n-1}$ and replace $y $ by $ty$,  then we get the recursion
$$
y L_n(y) = L_{n+1}(y) +t^{n-1} L_n(y) + t^{n-1} L_{n-1}(y), \qquad n\geq 1 \label{IsmailSalam}
$$
with $L_{0}(y)=1$ and $L_{1}(y)=y$, so we see that $L_n(y)=\tilde{P}_{0}^{(0,t)}(y).$ This observation means that the 
 recurrence \eqref{wielomianytfreepoisson}, with $q=0$ corresponds to monic orthogonal polynomials which orthogonalize  the distribution of $t\mathrm{Y}_{0,t}(x)$. So we deduce from \cite[Theorem 5.1]{AI83} that the law of $t\mathrm{Y}_{0,t}(x)$ is a purely discrete positive measure with bounded support, hence the distribution of $\mathrm{Y}_{0,t}(x) $ (= $\tilde{\mu}_{0,t}$) also has these properties. 
\end{proof}
\begin{remark}(1). In the contexts of combinatorics and of number theory, we can conclude that the $t$-free Poisson distribution is an object of significant interest. It turns out that certain familiar objects, for example the generating function of this distribution, can be represented by the continued fraction expansion \eqref{cotinuedfraction} with $q=0$, which is called the generalized Rogers-Ramanujan continued fraction; see \cite{AI83}.

\noindent (2). In this paper we have shown that the $t$-free Poisson is a discrete probability measure. A similar result was obtained by Blitvi{\'c} \cite{B12} in the case of the $t$-semicircular distribution. This observation suggests, that $t$-free probability is related to paper \cite{CHS15}. 

\noindent (3). The Proposition \ref{DowodDiscritePoisson} can be generalized to the case of $\|x\|\neq 1 $  (with the same proof). Then it can be formulated as follows: if $t\in (0,\min\{1/\|x\|^2,1\})$, then  the probability distribution of $\mathrm{Y}_{0,t}(x)$ is a purely discrete positive measure with bounded support. 
\end{remark}

\begin{center} Acknowledgments
\end{center}
The author would like to thank Marek Bo\.zejko and Franz Lehner for suggesting topics, several discussions and helpful comments. The author also thanks Mourad E. H. Ismail, who suggested the proof of Proposition \ref{DowodDiscritePoisson}. The work was supported by the Austrian Federal Ministry of Education, Science and
  Research and the Polish Ministry of Science and Higher Education, grants
  N$^{\textrm{os}}$  PL 06/2018 and  by the Narodowe Centrum Nauki grant no. 2018/29/B/HS4/01420.

\providecommand{\bysame}{\leavevmode\hbox to3em{\hrulefill}\thinspace}

\end{document}